\definecolor{mygray}{rgb}{0.3333,0.3333,0.3333} %xfig: #555555
\def\norm#1{\hspace{0.2ex} \|#1\| \hspace{0.2ex}} 
\newcommand{\labeq}[1]{\label{eq:#1}}			% Nummerierung von Gleichung
\def\req#1{{\rm(\ref{eq:#1})}}
\newcommand{\R}{\ensuremath{\mathbbm{R}}} 
\newcommand{\N}{\ensuremath{\mathbbm{N}}} 
\newcommand{\range}{\mathcal R}
\newcommand{\dx}[1][x]{\ensuremath{\,{\rm{d}} #1}}
\newcommand{\DS}{\displaystyle} 
\newcommand{\TS}{\textstyle} 
\newcommand{\Ld}{L_\diamond}
\newcommand{\Hd}{H_\diamond}
\newcommand{\Langle}{\left\langle}
\newcommand{\Rangle}{\right\rangle}
\newcommand{\outsupp}{\mathrm{out}_{\partial\Omega}\,\mathrm{supp}\,}
\newcommand{\innsupp}{\mathrm{inn\,supp}\,}
\newcommand{\out}{\mathrm{out}_{\partial\Omega}\,}
\newcommand{\inn}{\mathrm{int}\,}
\newcommand{\kommentar}[1]{}
\newtheorem{example}[theorem]{Example}
\newtheorem{remark}[theorem]{Remark}
\begin{document}

\title{Monotonicity based shape reconstruction in electrical impedance tomography}
       
\author{Bastian Harrach\footnotemark[2] and Marcel Ullrich\footnotemark[3]}
\renewcommand{\thefootnote}{\fnsymbol{footnote}}

\footnotetext[2]{Birth name: Bastian Gebauer, Department of Mathematics, University of Stuttgart, Germany\\
{\tt harrach@math.uni-stuttgart.de}}
\footnotetext[3]{Department of Mathematics, University of Stuttgart, Germany\\
{\tt marcel.ullrich@mathematik.uni-stuttgart.de}}
 
\footnotetext{\hrule \vspace{1ex} \centering This is a preprint version of a journal article published in\\
\emph{SIAM J.\ Math.\ Anal.} \textbf{45}(6), 3382--3403, 2013
(\url{http://dx.doi.org/10.1137/120886984}).
}

\maketitle

\pagestyle{myheadings}
\thispagestyle{plain}
\markboth{Bastian Harrach and Marcel Ullrich}{Monotonicity based shape reconstruction in EIT}

\begin{abstract}
Current-voltage measurements in electrical impedance tomography can be 
partially ordered with respect to definiteness of the associated self-adjoint Neumann-to-Dirichlet operators (NtD).
With this ordering, a point-wise larger conductivity leads to smaller current-voltage measurements, and smaller conductivities lead to larger measurements.

We present a converse of this simple monotonicity relation and use it to 
solve the shape reconstruction (aka inclusion detection) problem in EIT. 
The outer shape of a region where the conductivity differs from a known background conductivity
can be found by simply comparing the measurements to that of smaller or larger test regions.
\end{abstract}

%%%%%%%%%%%%%%%%%%%%%%%%%%%%%%%%%%%%%%%%%%%%%%%%%%%%%%%%%%%%%%%%%%%%%%%%%%%%
\section{Introduction}
\label{Sec:intro}
%%%%%%%%%%%%%%%%%%%%%%%%%%%%%%%%%%%%%%%%%%%%%%%%%%%%%%%%%%%%%%%%%%%%%%%%%%%%

We consider the shape reconstruction (aka inclusion detection) problem in electrical impedance tomography (EIT).
Let $\Omega$ describe an electrically conducting object which contains inclusions in which the conductivity 
$\sigma(x)$ differs from an otherwise known background conductivity.
Our aim is to detect these inclusions from current/voltage-measurements on the boundary $\partial \Omega$.

We assume that $\Omega\subset \R^n$, $n\geq 2$ is a domain with smooth boundary $\partial \Omega$ and outer normal vector $\nu$. For ease of presentation we also assume that $\Omega$ is bounded, the background conductivity is equal to $1$ and that we are given measurements on the complete boundary $\partial \Omega$. Our results easily extend to inhomogeneous (but known) backgrounds and partial boundary measurements, cf.\ section \ref{subsect:extensions}.

With these assumptions, our goal is to determine the inclusions shape, i.e., the set  $\supp  (\sigma-1)$, from knowledge of the Neumann-to-Dirichlet (NtD) operator  
%notation: introduce \kappa or write \sigma-1?
\[
\Lambda(\sigma):\ g\mapsto u^g_\sigma|_{\partial\Omega},
\]
where $u^g_\sigma$ is the solution of
\begin{equation*}
\nabla\cdot\sigma\nabla u^g_\sigma = 0 \text{ in }\Omega,\quad\sigma\partial_\nu u_\sigma^g\vert_{\partial\Omega} = g \text{ on }\partial\Omega,
\end{equation*}
cf.\ section \ref{subsect:NtD} for the precise mathematical setting.

In this work, we show that $\supp  (\sigma-1)$ can be reconstructed by
so-called \emph{monotonicity tests}, which
simply compare $\Lambda(\sigma)$ (in the sense of quadratic forms)
to NtD-operators $\Lambda(\tau)$ of test conductivities $\tau$. To be more precise,
the support of $\sigma-1$ can be reconstructed under the assumption that $\supp  (\sigma-1)\subset \Omega$
has connected complement. Otherwise, what we can reconstruct is essentially the support together with all holes that have no connection to the boundary $\partial \Omega$.

Moreover, we show 
that the test NtDs $\Lambda(\tau)$ can be replaced (without losing any information) by their linear approximations using the Fr\'echet derivative $\Lambda'(1)$ of $\Lambda(\sigma)$ around the background conductivity. Let us stress that the linearized tests still exactly recover the inclusion which is in accordance with the general principle that the linearized EIT problem still contains the exact shape information, cf. \cite{HS10}.

The term \emph{monotonicity tests} is used because our test criteria are motivated and partly follow from the simple and well-known monotonicity relation 
\begin{equation}\labeq{intro_monotonicity}
\sigma\leq \tau  \quad \mbox{ implies } \quad \Lambda(\sigma)\geq \Lambda(\tau).
\end{equation}
It seems quite natural and intuitive to probe the domain with
test inclusions using the implication \req{intro_monotonicity}, and this idea has been worked out and numerically tested in the works of 
Tamburrino and Rubinacci \cite{Tamburrino02,Tamburrino06}.
The main new part of this work is to rigorously justify this natural idea by proving a non-trivial converse of the implication \req{intro_monotonicity}.
Our proofs are based on the theory of localized potentials \cite{Geb08}.
%, and to replace 
%the test NtDs $\Lambda(\tau)$ (which require the computationally expensive solution of an inhomogenous forward problem) by their linearized counterparts without losing shape information.

For a quick impression of our result let us state it for two frequently considered special cases (see examples 
\ref{ex:main_th1}, \ref{ex:main_th1_linearized}, \ref{ex:main_th_disjoint} and \ref{ex:main_th_disjoint_linearized}). (Note that throughout the paper we use the relation symbol ''$\subset$'' instead of ''$\subseteq$'', if non-equality of the two related sets is obvious.)
\begin{enumerate}[(a)]
\item Let $\sigma=1+\chi_D$ where $D$ is open, and $\overline D\subset \Omega$ has a connected complement. Then for every open ball $B\subseteq \Omega$
\begin{align*}
B\subseteq D & \quad \mbox{ if and only if } \quad \Lambda(1+\chi_B)\geq \Lambda(\sigma)\\
& \quad \mbox{ if and only if } \quad \Lambda(1)+{\textstyle \frac{1}{2}} \Lambda'(1) \chi_B\geq \Lambda(\sigma).
\end{align*}
\item Let $\sigma=1+\chi_{D^+}-\frac{1}{2}\chi_{D^-}$ where $D^+,D^-\subseteq \Omega$ are open, 
$\overline{D^+}\cap \overline{D^-}=\emptyset$, and $\overline{D^+}\cup \overline{D^-}\subset \Omega$ has a connected complement. Then for every closed $C\subset \Omega$ with connected complement
\begin{align*}
D^+ \cup D^-\subseteq C \quad & \mbox{ if and only if } \quad \Lambda(1+\chi_C)\leq \Lambda(\sigma)\leq \Lambda(1-{\TS \frac{1}{2}}\chi_C)\\
& \mbox{ if and only if } \quad \Lambda(1)+ \Lambda'(1)\chi_C\leq \Lambda(\sigma)\leq \Lambda(1)- \Lambda'(1)\chi_C.
\end{align*}
\end{enumerate}
(a) is a special case of the \emph{definite case} in which either all inclusions have a higher conductivity, or all inclusions have a lower conductivity, than the background.
(a) shows how to test whether a small ball $B$ lies inside the inclusion or not. The inclusion can thus be obtained as the union of all balls that fulfill the test.

(b) is a special case of the more general \emph{indefinite case} in which the conductivity 
may differ in both directions from the background. Using the result in (b) we can test whether a large set $C$ contains the inclusions or not. The inclusion can thus be obtained as the intersection of all these large sets.

Our results show that (under quite general assumptions) 
monotonicity tests determine $\supp(\sigma-1)$ up to holes that have no connection to the boundary $\partial \Omega$. %When $\sigma$ is piecewise continuous and $\supp  (\sigma-1)\subset \Omega$
%has a connected complement, then $\supp(\sigma-1)$ is determined, cf. Corollary \ref{cor:exakt}.

Non-iterative methods for shape reconstruction problems have been studied intensively in the last 25 years, cf., e.g., the overview of Potthast \cite{potthast2006survey}. In the context of EIT, the inclusion detection problem was first considered by Friedmann and Isakov \cite{friedman1987detection,friedman1989uniqueness}. 
For the following brief overview, we restrict ourselves to the two most prominent and elaborated 
methods for detecting inclusions of unknown conductivity from the full Neumann-to-Dirichlet (or Dirichlet-to-Neumann) operator on all or part of the boundary: the Factorization Method and the Enclosure Method.

The Factorization Method (FM) was introduced by Kirsch \cite{Kir98,kirsch2000new} for inverse scattering problems and extended to impedance tomography by Brühl and Hanke \cite{Bru00,Bru01}. For its
further developments in the context of EIT see  \cite{Kir02,Han03,Geb05,Han07,Hyv07,Lec07,Nac07_2,GH07,Kirsch_book07,hakula2009computation,HS09,Schmitt2009,harrach2010factorization,schmitt2011factorization} and the recent review \cite{harrach2012recent}. 
The Factorization Method reconstructs the shape of inclusions (up to holes that have no connection to the boundary), but two major problems have not been solved so far. First of 
all, the method relies on a range test (or infinity test) for which there is no known convergent implementation (see, however, Lechleiter \cite{lechleiter2006regularization} for a first step in this direction).
Second, the method has only been justified for the \emph{definite case}
(or that the domain can be split into two a-priori known regions with the definiteness property, cf. Schmitt \cite{Schmitt2009} and the review \cite{harrach2012recent}).
% that
%the inclusions' conductivity is either everywhere higher or everywhere lower than the background

The Enclosure Method was introduced by Ikehata \cite{ikehata1999draw,ikehata2000reconstruction}. Further extensions including the use of the Sylvester-Uhlmann complex geometrical optics solutions have been worked out
in \cite{Bru00,ikehata2000numerical,ikehata2002regularized,ikehata2004electrical,Ide07,uhlmann2008reconstructing,ide2010local}. 
The method yields a stable testing criterion and it does not require
the definiteness assumption (see \cite{Ide07}). However, it does require the
construction of special, strongly oscillating probe functions and 
only reconstructs the convex hull of the inclusions (plus some non-convex features depending on the probe functions).

The herein presented monotonicity tests seem to be a particularly simple and intuitively appealing solution to the long-studied inclusion detection problem. They characterize the outer shape of the inclusions and not just the convex hull. They work for the general indefinite case (though the implementation is simpler in the definite case). Also, they allow a stable implementation (see remark~\ref{rem:convergence}),
and their linearized versions do not require solving inhomogeneous forward problems.

The paper is organized as follows. Section \ref{sec:basics_notations} introduces the mathematical setting
and the concept of inner and outer support. In section \ref{sect:monotonicity_locpot} we derive the main theoretical tools for our proofs: monotonicity estimates
and localized potentials. Section \ref{sect:reconstruction} then contains our main results: the characterization of inclusion 
by simple and stable monotonicity tests.

\section{Basic notations and support definitions}\label{sec:basics_notations}

\subsection{Basic notations and the mathematical setting}\label{subsect:NtD}

Let $\Omega\subset\R^n$, $n\geq 2$ be a bounded domain with smooth boundary $\partial\Omega$ and outer normal vector $\nu$. $L_+^\infty(\Omega)$ denotes the subspace of $L^\infty(\Omega)$-functions with positive essential infima. $\Hd^1(\Omega)$ and $\Ld^2(\partial \Omega)$ denote the spaces of $H^1$- and $L^2$-functions with vanishing integral mean on $\partial \Omega$.

The $L^2_\diamond(\partial\Omega)$-inner product is denoted by 
$\langle\cdot,\cdot\rangle$. For two bounded selfadjoint operators
$A,B:\ \Ld^2(\partial \Omega)\to \Ld^2(\partial \Omega)$ we write
\[
A\geq B
\]
if it holds in the sense of quadratic forms, i.e.,
\[
\Langle g,(A-B)g\Rangle\geq 0,\quad \mbox{ for all } g\in L^2_\diamond(\partial\Omega).
\]
For $\sigma_1,\sigma_2\in L^\infty(\Omega)$ we write $\sigma_1\geq \sigma_2$ if it holds pointwise (a.e.) on $\Omega$.

For $\sigma\in L^\infty_+(\Omega)$, the Neumann-to-Dirichlet (NtD) operator $\Lambda(\sigma)$ is defined by 
%notation: introduce \kappa or write \sigma-1?
\[
\Lambda(\sigma):\ L^2_\diamond(\partial \Omega)\to L^2_\diamond(\partial \Omega), \quad g\mapsto u^g_\sigma|_{\partial\Omega},
\]
where $u^g_\sigma\in H^1_\diamond(\Omega)$ is the unique solution of
\begin{equation}\labeq{math_model}
\nabla\cdot\sigma\nabla u^g_\sigma = 0 \text{ in }\Omega,\quad\sigma\partial_\nu u_\sigma^g\vert_{\partial\Omega} = g \text{ on }\partial\Omega
\end{equation}
which is equivalent to 
\begin{equation}\label{eq:math_model_variationel}
 \int_\Omega\sigma\nabla u^g_\sigma\cdot\nabla v\,\mathrm{d}x=\int_{\partial\Omega}gv\vert_{\partial\Omega}\,\mathrm{d}s\quad \mbox{ for all } v\in H^1_\diamond(\Omega).
\end{equation}
 It is well known $\Lambda(\sigma)$ is 
a selfadjoint compact linear operator, and that the associated bilinear form is given by
\[
\langle g, \Lambda(\sigma) h\rangle %:= \int_{\partial \Omega} g \Lambda(\sigma) h\dx[s]
= \int_\Omega \sigma \nabla u^g_\sigma \cdot \nabla u^h_\sigma \dx.
\]
%where, here and in the following,  $\langle \cdot , \cdot \rangle$ denotes the $\Ld^2(\partial \Omega)$-inner product. 

$\Lambda$ is Fr\'echet-differentiable, cf., e.g. Lechleiter
and Rieder \cite{Lec08} for a recent proof that uses only the abstract variational formulation (see also \cite{ikehata1990inversion} for similar results). Given some direction $\kappa\in L^\infty(\Omega)$ the derivative 
\[
\Lambda'(\sigma)\kappa:\ \Ld^2(\partial \Omega)\to \Ld^2(\partial \Omega)
\]
is the selfadjoint compact linear operator associated to the bilinear form
\begin{equation*}\labeq{Frechet_bilinearForm}
\langle \left( \Lambda'(\sigma)\kappa\right) g,h\rangle 
= -\int_\Omega \kappa \nabla u^g_\sigma
\cdot \nabla u^h_\sigma \dx.
\end{equation*}
Note that for $\kappa_1,\kappa_2\in L^\infty(\Omega)$ we obviously have that
\begin{equation}\labeq{Frechet_monotonicity}
\kappa_1\leq\kappa_2 \quad \mbox{ implies } \quad \Lambda'(\sigma)\kappa_1 \geq \Lambda'(\sigma)\kappa_2.
\end{equation}

The terms piecewise continuous and piecewise analytic are understood in the following sense.
\begin{definition}
\label{def:piecewise}
\begin{enumerate}[(a)]
\item A subset $\Gamma\subseteq \partial O$ of the boundary of an open set $O\subseteq \R^n$ is called a \emph{smooth boundary piece} if
it is a $C^\infty$-surface and $O$ lies on one side of it, i.e., if for each $z\in \Gamma$
there exists a ball $B_\epsilon(z)$ and a function $\gamma\in C^\infty(\R^{n-1},\R)$ such that upon relabeling and reorienting
\begin{align*}
\Gamma=\partial O\cap B_\epsilon(z)&=\{ x\in B_\epsilon(z) \; | \; x_n=\gamma(x_1,\ldots,x_{n-1}) \},\\
O\cap B_\epsilon(z)&=\{ x\in B_\epsilon(z) \; | \; x_n>\gamma(x_1,\ldots,x_{n-1}) \}.
\end{align*}
\item $O$ is said to have \emph{smooth boundary} if $\partial O$ is a union of smooth boundary pieces.
$O$ is said to have \emph{piecewise smooth boundary} if $\partial O$ is a countable union of the closures of smooth boundary pieces.
\item A function $\kappa \in L^\infty(\Omega)$ is called \emph{piecewise analytic} if there exist
finitely many pairwise disjoint subdomains $O_1,\ldots,O_M\subset \Omega$ with piecewise
smooth boundaries, such that $\overline{\Omega}= \overline{O_1\cup \ldots \cup O_M}$,
and $\kappa|_{O_m}$ has an extension which is (real-)analytic in a neighborhood of $\overline{O_m}$,
$m=1,\ldots,M$.
\item A function $\kappa\in L^\infty(\Omega)$ is called \emph{piecewise continuous} if
$\kappa$ is continuous on an open set $O\subset \Omega$ and $\Omega\setminus O$ is a set of zero measure.
\end{enumerate}
\end{definition}

% \begin{remark}[will be shifted to the proofs]
% A smooth boundary piece $\Gamma$ is an open and neither empty nor dense subset of the $n-1$-dimensional mannifold
% \[
% \{ x\in \R^n \; | \; x_n=\gamma(x_1,\ldots,x_{n-1}) \}
% \]
% so that its boundary $\overline{\Gamma} \setminus \Gamma$ is $n-2$-dimensional (Ch. IV, \S 4 im Hurewicz-Wallman). 
% The countable union of closed $n-2$-dimensional sets is  again $n-2$-dimensional (Thm. 3.2, \S 4 im Hurewicz-Wallman).
% Taking away a $n-2$-dimensional subset of a connected open set in $\R^n$ cannot make the set disconnected.
% (Ch. IV, \S 5 im Hurewicz-Wallman)
% \end{remark}

\subsection{Inner and outer support}\label{subsect:supportdef}

We will show that our method reconstructs $\supp(\sigma-1)$ (the \emph{inclusion}) up to holes that cannot be connected to the boundary $\partial \Omega$ without crossing the support. For the precise formulation, we will now introduce the concept of the \emph{inner} and the \emph{outer support} of a measurable function. For the frequently considered case that the inclusion has a connected complement
and the conductivity is piecewise continuous, the inner and the outer support only differ by the boundary of the support, cf.\ corollary~\ref{cor:exakt}.
The following has been inspired by the use of the infinity support of Kusiak and Sylvester \cite{Kus03}, cf.\ also \cite{GH08,HS10}.

\begin{definition}\label{def:connected}
A relatively open set $U\subseteq \overline{\Omega}$ is called \emph{connected to $\partial \Omega$} if $U\cap \Omega$ is connected and $U\cap \partial \Omega\neq \emptyset$.
\end{definition}
%Note that if $U\subseteq \overline{\Omega}$ is relatively open and connected to $\Omega$ then $U\cap \partial \Omega$ contains a smooth boundary piece
%and $U$ is path-connected.

\begin{definition}\label{def:support}
For a measurable function $\kappa:\ \Omega \to \R$ we define:
\begin{enumerate}[(a)]
\item the \emph{support} $\mathrm{supp}\,\kappa$ as the complement (in $\overline\Omega$) of the union of those relatively open  $U\subseteq\overline{\Omega}$, for which $\kappa|_U\equiv 0$,
\item the \emph{inner support} $\innsupp \kappa$ as the union of those open sets $U\subseteq \Omega$,
for which $\mathrm{ess\,inf}_{x\in U}|\kappa(x)|>0$.
\item the \emph{outer support} $\outsupp \kappa$ as the complement (in $\overline\Omega$) of the union of those relatively open $U\subseteq \overline{\Omega}$
that are connected to $\partial \Omega$ and for which $\kappa|_U\equiv 0$. 
\end{enumerate}
The interior of a set $M\subseteq\Omega$ is denoted by $\inn M$ and its closure (with respect to $\R^n$) by $\overline M$. If $M$ is measurable we also define
\begin{enumerate}[(a)]
 \item[(d)] $\out M=\outsupp \chi_M,$
\end{enumerate}
where $\chi_M$ is the characteristic function of $M$.
\end{definition}

\begin{lemma}\label{lemma:support_properties}
For every measurable function $\kappa:\Omega \to \R$ and 
every measurable set $M$ the following properties hold.
\begin{enumerate}[(a)]
\item $\supp \kappa, \outsupp \kappa, \out M\subseteq \overline\Omega$ are closed.
\item $\innsupp \kappa\subseteq \Omega$ is open.
\item $\innsupp \kappa\subseteq \supp \kappa \subseteq \outsupp \kappa$.
\item $\out (\supp \kappa)=\outsupp \kappa$ 
\item If $\supp \kappa\subseteq \Omega$ and $\Omega\setminus \supp \kappa$ is connected then
$\supp \kappa=\outsupp \kappa$.
\item If $\kappa$ is piecewise continuous then $\supp\kappa=\overline{\innsupp \kappa}$.
\label{lemma:support_properties_pcw_cont}
\end{enumerate}
\end{lemma}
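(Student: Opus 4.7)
The plan is to unwind the definitions in order. Parts (a) and (b) are immediate: $\supp\kappa$ and $\outsupp\kappa$ are by construction complements (in $\overline\Omega$) of unions of relatively open sets, while $\innsupp\kappa$ is by construction a union of open sets. For (c), the inclusion $\supp\kappa \subseteq \outsupp\kappa$ is purely formal, since imposing the additional ``connected to $\partial\Omega$'' condition shrinks the family of admissible $U$'s and therefore enlarges the complement. For $\innsupp\kappa \subseteq \supp\kappa$, I would observe that $\kappa$ vanishes a.e.\ on $\overline\Omega \setminus \supp\kappa$ (being a countable union, by Lindel\"ofness, of sets on which $\kappa$ vanishes a.e.), so no open subset of that complement can satisfy $\mathrm{ess\,inf}|\kappa|>0$.

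For (d) and (e) I would just chase an admissible test set through the definitions. For (d): if $U$ is relatively open, connected to $\partial\Omega$, with $\kappa|_U \equiv 0$ a.e., then $U \subseteq \overline\Omega \setminus \supp\kappa$, which immediately gives $\chi_{\supp\kappa}|_U \equiv 0$ pointwise; conversely, if $\chi_{\supp\kappa}|_U \equiv 0$ a.e., then $U \cap \supp\kappa$ is a null set, and since $\kappa$ vanishes a.e.\ on its complement we still get $\kappa|_U \equiv 0$ a.e. For (e), the key observation is that $U := \overline\Omega \setminus \supp\kappa$ is \emph{itself} an admissible test set in the definition of $\outsupp\kappa$: it is relatively open, contains all of $\partial\Omega$ because $\supp\kappa \subset \Omega$, and $U \cap \Omega = \Omega \setminus \supp\kappa$ is connected by hypothesis, while $\kappa$ vanishes a.e.\ on it. Hence $\outsupp\kappa \subseteq \supp\kappa$, and (c) supplies the other inclusion.

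Part (f) is the main obstacle, because here the definitions alone do not suffice: one must bridge an almost-everywhere statement (``$\kappa$ is not a.e.\ zero near $x$'') and a pointwise lower-bound statement (``$\mathrm{ess\,inf}|\kappa|>0$ on some small ball''). The inclusion $\overline{\innsupp\kappa} \subseteq \supp\kappa$ is free from (c) together with closedness of $\supp\kappa$. For the reverse, I would fix $x \in \supp\kappa$ and an arbitrary open neighborhood $V$ of $x$. By definition of the support, $\kappa$ is not a.e.\ zero on $V \cap \Omega$. Piecewise continuity supplies an open $O \subseteq \Omega$ with $\Omega \setminus O$ a null set and $\kappa$ continuous on $O$, so there exists $y \in V \cap O$ with $\kappa(y) \neq 0$. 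Continuity at $y$ then produces a ball $B \subset V \cap O \subseteq \Omega$ on which $|\kappa| \geq |\kappa(y)|/2 > 0$, whence $B \subseteq \innsupp\kappa$. Since $V$ was arbitrary, every neighborhood of $x$ meets $\innsupp\kappa$, so $x \in \overline{\innsupp\kappa}$.
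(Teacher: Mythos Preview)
Your proposal is correct and follows essentially the same route as the paper: parts (a)--(e) are unwound from the definitions exactly as the authors do (you merely make the Lindel\"of step explicit in (c), whereas the paper's (c) argues by intersecting a test set $U$ with $\innsupp\kappa$), and for (f) you prove the direct implication while the paper argues the contrapositive, but both rest on the same key observation that $\{\xi\in O:\kappa(\xi)\neq 0\}$ is an open subset of $\innsupp\kappa$.
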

\begin{proof}
\begin{enumerate}[(a)]
\item[(a)] and (b) immediately follow from definition \ref{def:support}.

\item[(c)]
If $\kappa=0$ (a.e.) on a relatively open set $U\subset \overline\Omega$, then 
$\kappa=0$ (a.e.) on the open set $U\cap \innsupp \kappa$. From the definition of the
inner support, it follows that $U\cap \innsupp \kappa=\emptyset$. This shows the first inclusion in (c).
The second inclusion is obvious.
 
\item[(d)] follows from the fact that for every relatively open set $U\subseteq \overline\Omega$ we have
\begin{align*}
\kappa=0 \mbox{ (a.e.) on $U$} & \quad \mbox{ if and only if } \quad U\subseteq \overline \Omega \setminus \supp\kappa\\
& \quad \mbox{ if and only if } \quad \chi_{\supp \kappa}=0 \mbox{ (a.e.) on $U$.}
\end{align*}

\item[(e)] Since $\supp\kappa \subseteq \Omega$ implies that $\overline \Omega\setminus \supp\kappa$ contains $\partial \Omega$,
(e) immediately follows from (c) and  definition \ref{def:support}.

\item[(f)] Let $\kappa$ be continuous on an open set $O\subset \Omega$ where $\Omega\setminus O$ has zero measure.
The assertion follows from (a) and (c) if we can show that for every $x\in \overline \Omega$
\[
x\notin \overline{\innsupp \kappa}\quad \mbox{ implies } \quad x\notin \supp \kappa.
\] 
Let $x\notin \overline{\innsupp \kappa}$. Then there exists a relatively open set $B\subset \overline\Omega$ with $x\in B$ and $B\cap \overline{\innsupp \kappa}=\emptyset$. Obviously, $\{ \xi\in O: \kappa(\xi)\neq 0\}\subseteq \innsupp \kappa$, so that 
$\kappa=0$ on $O\cap B$. Since $\Omega\setminus O$ has zero measure, we have that $\kappa=0$ (a.e.) on $B$ and thus
$B\cap \supp\kappa=\emptyset$ which shows the assertion.

%are obvious. (c) is simple
%(c): If $\kappa=0$ a.e. on an open $U$, then $U$ cannot intersect innsupp, so first inclusion follows.
\end{enumerate}
\end{proof}

As a consequence of Lemma \ref{lemma:support_properties}(e) and (f) we have 
\begin{corollary}\label{cor:exakt}
If $\kappa$ is piecewise continuous, $\supp \kappa\subseteq \Omega$ and $\Omega\setminus \supp \kappa$ is connected then 
\[
\overline{\innsupp \kappa}=\supp \kappa=\outsupp \kappa.
\]
\end{corollary}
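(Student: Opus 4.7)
The plan is to simply combine parts (e) and (f) of Lemma~\ref{lemma:support_properties}, which were proved precisely so that this corollary reduces to two one-line applications. Since the hypotheses of the corollary are exactly the union of the hypotheses required by (e) and by (f), no additional work is needed.

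First, I would invoke Lemma~\ref{lemma:support_properties}(e): the assumptions $\supp\kappa\subseteq\Omega$ and $\Omega\setminus\supp\kappa$ connected are in force, so they yield $\supp\kappa=\outsupp\kappa$. Second, I would invoke Lemma~\ref{lemma:support_properties}(f): the piecewise continuity of $\kappa$ gives $\supp\kappa=\overline{\innsupp\kappa}$. Chaining these two equalities produces $\overline{\innsupp\kappa}=\supp\kappa=\outsupp\kappa$, which is exactly the claim.

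There is no real obstacle here; the corollary is essentially a restatement bundling two parts of the preceding lemma. The only thing worth checking is that each hypothesis in the corollary matches the hypothesis pattern of the corresponding lemma part (piecewise continuity for (f); the support/complement conditions for (e)), which is immediate by inspection. Consequently a proof of two or three sentences suffices.
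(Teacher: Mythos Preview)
Your proposal is correct and matches the paper's approach exactly: the paper states the corollary simply as a consequence of Lemma~\ref{lemma:support_properties}(e) and (f), without writing out any further argument. Your two-sentence chaining of (e) and (f) is precisely what is intended.
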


\section{Monotonicity and localized potentials}\label{sect:monotonicity_locpot}

\subsection{A monotonicity principle}

Our main theoretical tools are a monotonicity estimate and the theory of localized potentials.
The following estimate goes back to Ikehata, Kang, Seo, and Sheen \cite{Kan97,ikehata1998size}, cf., also the similar results in Ide et al.~\cite{Ide07}, Kirsch \cite{Kir02}, and in \cite{HS09,HS10}.
For the convenience of the reader we state the estimate together with a short proof that we copy from \cite[lemma~2.1]{HS10}.

\begin{lemma}\label{le:monotonicity}
Let $\sigma_1,\sigma_2\in L^\infty_+(\Omega)$ be two conductivities, $g\in L^2_\diamond(\Omega)$ be an applied boundary current and $u_2:=u^g_{\sigma_2}\in H^1_\diamond(\Omega)$. Then
\begin{equation}\label{eq:monotonicity_inequality}
 \int_\Omega(\sigma_1-\sigma_2) |\nabla u_2|^2 \dx 
\geq \Langle g, \left(\Lambda(\sigma_2)-\Lambda(\sigma_1)\right)g \Rangle
\geq \int_\Omega\frac{\sigma_2}{\sigma_1}(\sigma_1-\sigma_2)|\nabla u_2|^2\dx.
\end{equation}
\end{lemma}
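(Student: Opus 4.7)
The plan is to express the middle quantity as an energy difference and then sandwich it using two clever nonnegative square expansions. Write $u_1 := u^g_{\sigma_1}$ and $u_2 := u^g_{\sigma_2}$. By testing \eqref{eq:math_model_variationel} with $v = u^g_\sigma$ itself, one has the well-known identity
\[
\langle g, \Lambda(\sigma) g\rangle = \int_\Omega \sigma |\nabla u^g_\sigma|^2 \dx,
\]
so that
\[
\langle g, (\Lambda(\sigma_2) - \Lambda(\sigma_1)) g\rangle = \int_\Omega \sigma_2 |\nabla u_2|^2 \dx - \int_\Omega \sigma_1 |\nabla u_1|^2 \dx.
\]
I also record the two cross-identities obtained by using $u_2$ as a test function in the variational equation for $u_1$ and vice versa, namely
\[
\int_\Omega \sigma_1 \nabla u_1 \cdot \nabla u_2 \dx = \int_{\partial\Omega} g u_2 \dx[s] = \int_\Omega \sigma_2 |\nabla u_2|^2 \dx,
\]
and symmetrically $\int_\Omega \sigma_2 \nabla u_2 \cdot \nabla u_1 \dx = \int_\Omega \sigma_1 |\nabla u_1|^2 \dx$.

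For the upper bound I would expand the obvious inequality $0 \le \int_\Omega \sigma_1 |\nabla(u_1-u_2)|^2 \dx$, substitute the cross-identity above to eliminate $\int_\Omega \sigma_1 \nabla u_1\cdot\nabla u_2 \dx$, and rearrange. This yields
\[
\int_\Omega \sigma_1 |\nabla u_1|^2 \dx \ge 2\int_\Omega \sigma_2 |\nabla u_2|^2 \dx - \int_\Omega \sigma_1 |\nabla u_2|^2 \dx,
\]
which, after subtracting from $\int_\Omega \sigma_2|\nabla u_2|^2 \dx$, becomes exactly the left inequality in \eqref{eq:monotonicity_inequality}.

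For the lower bound the key trick is to use a weighted square: start from
\[
0 \le \int_\Omega \sigma_1 \left| \nabla u_1 - \tfrac{\sigma_2}{\sigma_1}\nabla u_2 \right|^2 \dx,
\]
which is legitimate because $\sigma_1 \in L^\infty_+(\Omega)$ is bounded below away from zero. Expanding and invoking the other cross-identity $\int_\Omega \sigma_2 \nabla u_1\cdot\nabla u_2 \dx = \int_\Omega \sigma_1|\nabla u_1|^2 \dx$, the cross term cancels one copy of the $\sigma_1|\nabla u_1|^2$ energy and gives $\int_\Omega \sigma_1|\nabla u_1|^2 \dx \le \int_\Omega \tfrac{\sigma_2^2}{\sigma_1}|\nabla u_2|^2 \dx$. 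Subtracting this from $\int_\Omega \sigma_2|\nabla u_2|^2 \dx$ and factoring $\sigma_2/\sigma_1$ yields the right-hand inequality. The only mildly delicate point is the weighted square, but positivity of $\sigma_1$ and $\sigma_2$ makes $\sigma_2/\sigma_1 \in L^\infty_+(\Omega)$ so everything is well-defined and no obstacle arises; the rest is bookkeeping.
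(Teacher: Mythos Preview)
Your proof is correct and follows essentially the same route as the paper: both use the expansion of $\int_\Omega \sigma_1|\nabla(u_1-u_2)|^2\dx$ together with the cross-identity for the first inequality, and the weighted square $\int_\Omega \sigma_1\bigl|\nabla u_1-\tfrac{\sigma_2}{\sigma_1}\nabla u_2\bigr|^2\dx$ for the second. The only cosmetic difference is that the paper reaches the weighted square via an intermediate ``interchanging $\sigma_1$ and $\sigma_2$'' identity, whereas you write down the weighted square directly and invoke the second cross-identity; the algebra is the same.
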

\begin{proof}
Let $u_1:=u^g_{\sigma_1}\in H^1_\diamond(\Omega)$. From \eqref{eq:math_model_variationel} we deduce
\begin{equation*}%\label{eq:conclusion_of_math_model_va_1}
\int_\Omega \sigma_1\nabla u_1\cdot\nabla u_2 \dx
= \Langle g, \Lambda(\sigma_2) g\Rangle 
= \int_\Omega \sigma_2\nabla u_2\cdot\nabla u_2 \dx
\end{equation*}
and thus 
\begin{align*}
 \int_\Omega\sigma_1|\nabla(u_1-u_2)|^2\,\mathrm{d}x
&=\int_\Omega\sigma_1|\nabla u_1|^2\,\mathrm{d}x-2\int_\Omega\sigma_2|\nabla u_2|^2\,\mathrm{d}x+
\int_\Omega\sigma_1|\nabla u_2|^2\,\mathrm{d}x\\
&=\Langle g, \Lambda(\sigma_1) g\Rangle 
- \Langle g, \Lambda(\sigma_2) g\Rangle 
+ \int_\Omega (\sigma_1-\sigma_2)|\nabla u_2|^2\,\mathrm{d}x.
\end{align*}
Since the left hand side is non-negative, the first asserted inequality follows.

Interchanging $\sigma_1$ and $\sigma_2$ we obtain 
\begin{align*}
\lefteqn{\Langle g, \left(\Lambda(\sigma_2)-\Lambda(\sigma_1)\right)g\Rangle}\\
&= \int_\Omega (\sigma_1-\sigma_2)|\nabla u_1|^2\dx
  + \int_\Omega \sigma_2|\nabla (u_2-u_1)|^2\dx\\
&= \int_\Omega \left( \sigma_1 |\nabla u_1|^2 + \sigma_2 |\nabla u_2|^2
- 2 \sigma_2 \nabla u_1\cdot \nabla u_2 \right)\dx\\
&=\int_\Omega\sigma_1\left|\nabla u_1-\frac{\sigma_2}{\sigma_1}\nabla u_2\right|^2
\dx + \int_\Omega \left(\sigma_2-\frac{{\sigma_2}^2}{\sigma_1}\right)|\nabla u_2|^2\dx.
\end{align*}
Since the first integral on the right hand-side is non-negative, the second asserted inequality follows.
\end{proof}

We call lemma \ref{le:monotonicity} a \emph{monotonicity estimate} because of the following
corollary. 
\begin{corollary}\label{Cor:monotonicity}
For two conductivities $\sigma_1,\sigma_2\in L^\infty_+(\Omega)$
\begin{equation}\label{eq:monotonicity_for_conductivitys}
\sigma_1\leq\sigma_2 \quad \mbox{ implies } \quad \Lambda(\sigma_1)\geq\Lambda(\sigma_2).
\end{equation}
%where the first inequality means that $\sigma_1(x)\leq \sigma_2(x)$ for all $x\in \Omega$ %a.e., and the second inequality is to be interpreted in the sense of quadratic forms, i.e.
%\[
%\Langle g, \left(\Lambda(\sigma_1)-\Lambda(\sigma_2)\right) g\Rangle \geq 0.
%\]
\end{corollary}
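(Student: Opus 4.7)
The corollary is an immediate consequence of Lemma~\ref{le:monotonicity}, so there is essentially no real obstacle here; the proof is a one-line observation. My plan is to invoke the left-hand (first) inequality of \eqref{eq:monotonicity_inequality} and exploit the pointwise sign of $\sigma_1 - \sigma_2$.

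Concretely, fix an arbitrary $g \in L^2_\diamond(\partial\Omega)$ and let $u_2 = u^g_{\sigma_2} \in H^1_\diamond(\Omega)$. Lemma~\ref{le:monotonicity} gives
\[
\int_\Omega (\sigma_1 - \sigma_2)\,|\nabla u_2|^2 \dx
\;\geq\; \Langle g,\,(\Lambda(\sigma_2) - \Lambda(\sigma_1))\,g\Rangle.
\]
Under the assumption $\sigma_1 \leq \sigma_2$ a.e.\ in $\Omega$, the integrand on the left is nonpositive, so the left-hand side is $\leq 0$. Hence $\Langle g, (\Lambda(\sigma_2) - \Lambda(\sigma_1))g\Rangle \leq 0$ for every $g$, which by definition of the quadratic-form ordering on selfadjoint operators is precisely $\Lambda(\sigma_1) \geq \Lambda(\sigma_2)$.

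One small care point: the ordering of NtD operators is defined on $L^2_\diamond(\partial\Omega)$, so the conclusion should be stated for all $g$ in that space (as is already the convention in section~\ref{subsect:NtD}). Note that the second inequality in Lemma~\ref{le:monotonicity} is not useful for this direction, since $\frac{\sigma_2}{\sigma_1}(\sigma_1 - \sigma_2)$ is also nonpositive and yields only a lower bound; this is why we pick the first inequality. No further ingredients (localized potentials, piecewise analyticity, etc.) are needed.
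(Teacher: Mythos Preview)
Your argument is correct and matches the paper's approach exactly: the paper presents the corollary without an explicit proof, treating it as an immediate consequence of Lemma~\ref{le:monotonicity}, and your use of the first inequality in \eqref{eq:monotonicity_inequality} together with the sign of $\sigma_1-\sigma_2$ is precisely the intended one-line justification.
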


\begin{remark}
Corollary \ref{Cor:monotonicity} already yields a simple monotonicity based reconstruction algorithm.
Assume that the conductivity in the investigated object is $\sigma=1+\chi_D$, 
where the measurable set $D\subseteq \Omega$ describes the unknown inclusion. 
Then for all other measurable sets $B\subseteq \Omega$
\begin{equation}\label{eq:monotonicity_for_sets}
 B\subseteq D\quad \mbox{ implies } \quad \Lambda(1+\chi_B)\geq\Lambda(\sigma),
\end{equation}
so that the set
\[
R:=\bigcup \left\{ B\subseteq\Omega\,:\, B \mbox{ measurable, and } 
 \Lambda(1+\chi_B)\geq\Lambda(\sigma) \right\}
\]
is an upper bound of $D$.

A numerical approximation of (this upper bound of) $D$ can be
calculated by choosing a number of small balls $B=B_\epsilon(z)\subseteq \Omega$ (with center $z\in \Omega$
and radius $\epsilon>0$) and marking all balls where the \emph{monotonicity test} $\Lambda(1+\chi_B)\geq\Lambda(\sigma)$ holds true.
Algorithms based on this idea have been worked out and numerically
tested in the works of Tamburrino and Rubinacci \cite{Tamburrino02,Tamburrino06}.
\end{remark}

Also, Lemma~\ref{le:monotonicity} gives an estimate for the Fr\'echet derivative of $\Lambda$ that 
will be the basis for linearizing our monotonicity tests without losing shape information (cf.\ \cite{HS10} for the origin of this idea).
\begin{corollary}\label{Cor:monotonicity_linearized}
Let $\sigma\in L^\infty_+(\Omega)$. Let $\Lambda(1)$ be the NtD-Operator corresponding to the background conductivity $1$ and $\Lambda'(1)$ be its Fr\'echet derivative (see subsection \ref{subsect:NtD}). Then
\begin{equation*}
 %\int_\Omega(\sigma-1) |\nabla u_2|^2 \dx 
\Lambda'(1)(1-\sigma) 
\geq  \Lambda(1)-\Lambda(\sigma) 
\geq  \Lambda'(1)\left( \frac{1}{\sigma}(1-\sigma)\right). 
%\int_\Omega\frac{1}{\sigma}(\sigma-1)|\nabla u_2|^2\dx.
\end{equation*}
\end{corollary}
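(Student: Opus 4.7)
The plan is to apply Lemma~\ref{le:monotonicity} directly with $\sigma_1:=\sigma$ and $\sigma_2:=1$, and then rewrite the two bounding integrals as bilinear forms of the Fr\'echet derivative $\Lambda'(1)$. Since the result is just a pointwise-in-$g$ statement about quadratic forms, everything should reduce to matching two identities.

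First I would choose an arbitrary boundary current $g\in L^2_\diamond(\partial\Omega)$ and let $u_1:=u^g_1\in H^1_\diamond(\Omega)$ denote the background potential, i.e.\ the solution of \eqref{eq:math_model} with conductivity $\sigma\equiv 1$. Plugging $\sigma_1=\sigma$, $\sigma_2=1$ into \eqref{eq:monotonicity_inequality} gives
\begin{equation*}
\int_\Omega(\sigma-1)|\nabla u_1|^2\dx
\;\geq\; \Langle g,(\Lambda(1)-\Lambda(\sigma))g\Rangle
\;\geq\; \int_\Omega \tfrac{1}{\sigma}(\sigma-1)|\nabla u_1|^2\dx.
\end{equation*}

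Next I would use the bilinear-form expression for $\Lambda'(1)\kappa$ recalled just before \req{Frechet_monotonicity}, namely
\[
\Langle (\Lambda'(1)\kappa)g,g\Rangle \;=\; -\int_\Omega \kappa|\nabla u_1|^2\dx,
\]
applied with the two choices $\kappa=1-\sigma$ and $\kappa=\tfrac{1}{\sigma}(1-\sigma)$. The first choice yields
\[
\Langle (\Lambda'(1)(1-\sigma))g,g\Rangle \;=\; \int_\Omega(\sigma-1)|\nabla u_1|^2\dx,
\]
and the second yields
\[
\Langle (\Lambda'(1)(\tfrac{1}{\sigma}(1-\sigma)))g,g\Rangle \;=\; \int_\Omega\tfrac{1}{\sigma}(\sigma-1)|\nabla u_1|^2\dx.
\]
Substituting both identities into the chain of integral inequalities above gives
\[
\Langle (\Lambda'(1)(1-\sigma))g,g\Rangle
\;\geq\; \Langle g,(\Lambda(1)-\Lambda(\sigma))g\Rangle
\;\geq\; \Langle (\Lambda'(1)(\tfrac{1}{\sigma}(1-\sigma)))g,g\Rangle,
\]
and since $g\in L^2_\diamond(\partial\Omega)$ was arbitrary, the claimed operator inequalities follow.

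There is really no hard step here: the argument is just a substitution. The only thing to keep straight is the sign convention in the Fr\'echet derivative (the minus sign in the bilinear form of $\Lambda'(1)\kappa$), which is what makes $\kappa=1-\sigma$ (rather than $\sigma-1$) the correct direction to plug in so that the resulting operator inequality has the same sense as the one obtained from the raw monotonicity estimate.
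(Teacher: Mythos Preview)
Your proof is correct and is exactly the intended argument: the paper states this corollary immediately after Lemma~\ref{le:monotonicity} without a separate proof, relying on precisely the substitution $\sigma_1=\sigma$, $\sigma_2=1$ in \eqref{eq:monotonicity_inequality} together with the bilinear-form expression for $\Lambda'(1)\kappa$. There is nothing to add beyond perhaps noting that $\tfrac{1}{\sigma}(1-\sigma)\in L^\infty(\Omega)$ since $\sigma\in L^\infty_+(\Omega)$, so that $\Lambda'(1)$ is indeed applicable to this direction.
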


Of course, in practical EIT applications, it is not possible to measure 
boundary data with infinite precision. Moreover, with a limited number of electrodes on the boundary of an imaging subject (and limited accuracy),
we can only obtain a finite-dimensional approximation to the true NtD. Also, we can only calculate finite-dimensional approximations
of the NtD for test conductivities (and their linearized counterparts).
Hence, let us comment on the stability of monotonicity tests with respect to such errors.

\begin{remark}\label{rem:convergence}
Monotonicity/definiteness tests can be stably implemented in the following sense. Let $A\in \mathcal L(H)$ be a selfadjoint compact operator on a Hilbert space $H$, and let $(A^\delta)_{\delta>0}\subseteq \mathcal L(H)$ be a family of compact 
(e.g.\ finite dimensional) approximations with
\[
\norm{A^\delta-A}_\mathcal {L(H)}<\delta.
\]
Possibly replacing $A^\delta$ by its symmetric part, we can assume that $A^\delta$ is selfadjoint.

For $\alpha>0$, we define the regularized definiteness test
\[
R_\alpha(A^\delta):= \left\{ \begin{array}{l l} 1 & \mbox{ if $\langle A^\delta g,g\rangle\geq -\alpha\norm{g}^2$ for all $g\in H$,}\\
0 & \mbox{ otherwise,}
\end{array}\right.
\]
which is equivalent to checking whether the smallest eigenvalue of $A^\delta$ is not below $-\alpha$.

If $A\geq 0$ then $\langle A^\delta g,g\rangle\geq -\delta\norm{g}^2$ for all $g\in H$.
If $A\not \geq 0$ then $A$ has a negative eigenvalue $\lambda<0$ so that 
$\langle A^\delta g,g\rangle\geq -\delta\norm{g}^2$ cannot hold for all $g\in H$ for $\delta<|\lambda|/2$.
Hence, 
\[
R_{\delta}(A^\delta) = \left\{ \begin{array}{l l} 1 & \mbox{ if $A\geq 0$},\\
0 & \mbox{ if $A\not\geq 0$ and $\delta$ is sufficiently small}.
\end{array}\right.
\]
\end{remark}

\subsection{Localized potentials}

We will show that a certain converse of
the monotonicity relation \req{monotonicity_for_conductivitys}, resp., \req{monotonicity_for_sets}
holds true. The main theoretical tool for this result is to use the theory of localized 
potentials by one of the authors \cite{Geb08} to control the energy terms $|\nabla u|^2$ in the monotonicity estimate in lemma~\ref{le:monotonicity}.

Roughly speaking, \cite{Geb08} shows that there exist electric potentials which have 
arbitrarily large energy $|\nabla u|^2$ in some region and arbitrarily small energy in
another region, as long as the high-energy region can be reached from the boundary without crossing the low-energy region.
%Figure?

We will make use of the following variant of the result in \cite{Geb08}.
%We will make use of the following formulation of this result.

%%%%%%%%%%%%%%%%%%%%%%%%%%%%%%%%%%%%%%%%%%%%%%%loc_special_start%%%%%%%%%%%%%%%%%%%%%%%%%%%%%%%%%%%%%%%%%%%%%%%%%
\begin{theorem}\label{thm:loc_pot}
Let $D_1,D_2\subseteq\overline\Omega$ be two measurable sets with 
\[
\inn D_1 \nsubseteq \out D_2.
\]
Furthermore let $\sigma\in L^\infty_+(\Omega)$ be piecewise analytic.

Then there exists $(g_m)_{m\in\mathbb{N}}\subset L^2_\diamond(\partial \Omega)$ such that the 
solutions $(u_m)_{m\in \mathbb{N}}\subset \Hd^1(\Omega)$ of
\[
\nabla \cdot \sigma \nabla u_m=0 \quad \mbox{ in $\Omega$}, \qquad \sigma \partial_\nu u_m|_{\partial \Omega}=g_m, 
\]
fulfill
\[
 \lim_{m\rightarrow\infty}\int_{D_1}|\nabla u_m|^2\,\mathrm{d}x=\infty\quad\text{and}\quad\lim_{m\rightarrow\infty}\int_{D_2}|\nabla u_m|^2\,\mathrm{d}x=0.
\]
\end{theorem}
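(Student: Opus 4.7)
The plan is to reduce this result to the original localized potentials theorem in \cite{Geb08} by locating a small ball inside $\inn D_1$ which can be reached from $\partial\Omega$ through a region essentially disjoint from $D_2$. The hypothesis $\inn D_1 \nsubseteq \out D_2$ provides a point $z \in \inn D_1$ with $z \notin \out D_2$. By definition~\ref{def:support}(d), there exists a relatively open set $U \subseteq \overline{\Omega}$ with $z \in U$, $U$ connected to $\partial\Omega$, and $\chi_{D_2}|_U \equiv 0$, i.e., $D_2 \cap U$ has measure zero.

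Next I would exploit that $z$ is an interior point of $D_1$: for all sufficiently small $\epsilon > 0$, the ball $B := B_\epsilon(z)$ satisfies $B \subseteq \inn D_1 \cap U$. Consequently $B \subseteq D_1$, and $B$ is contained in the region $U$ which connects to $\partial\Omega$ and avoids $D_2$ up to a set of measure zero. This configuration (a small, nicely placed target region plus a connecting corridor to the boundary in the complement of $D_2$) is precisely the hypothesis under which the localized potentials construction in \cite{Geb08} is carried out, provided $\sigma$ is piecewise analytic so that unique continuation applies across the analytic pieces.

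Then I would invoke \cite{Geb08} with the small ball $B$ playing the role of the ''high-energy'' set and $D_2$ the ''low-energy'' set. This produces a sequence $(g_m) \subseteq \Ld^2(\partial\Omega)$ such that the associated solutions $u_m \in \Hd^1(\Omega)$ of \req{math_model} satisfy
\[
\int_B |\nabla u_m|^2 \dx \to \infty \quad \text{and} \quad \int_{D_2} |\nabla u_m|^2 \dx \to 0.
\]
Since $B \subseteq D_1$, the first convergence immediately yields $\int_{D_1} |\nabla u_m|^2 \dx \to \infty$, which completes the argument.

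The main delicate step is matching the abstract setup of \cite{Geb08} to the present formulation: one must check that ''connectedness to $\partial\Omega$ through the complement of $D_2$'' in the sense of definitions~\ref{def:connected} and~\ref{def:support} indeed suffices (rather than, say, strict topological disjointness of $U$ from $D_2$), and that the null-measure overlap $|U \cap D_2| = 0$ is harmless because the localized potentials are obtained as variational limits whose energy functionals are insensitive to null sets. The piecewise analyticity assumption on $\sigma$ is needed precisely here to extend the unique continuation arguments of \cite{Geb08} across interfaces; everything else is a routine translation between the formalism of \cite{Geb08} and the inner/outer support language introduced in section~\ref{subsect:supportdef}.
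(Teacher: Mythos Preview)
Your proposal is correct and follows essentially the same route as the paper: locate a ball $B\subseteq \inn D_1$ inside a relatively open $U$ that is connected to $\partial\Omega$ and meets $D_2$ only in a null set, then use $B$ as the high-energy target and $D_2$ as the low-energy set. The only difference is that the paper does not cite \cite{Geb08} as a black box but instead reproduces the mechanism: it introduces virtual measurement operators $L_1,L_2,L_B$, rewrites the assertion as the range non-inclusion $\range(L_B)\nsubseteq \range(L_2)$, and proves $\range(L_B)\cap\range(L_2)=\{0\}$ by a unique continuation argument in $(U\cap\Omega)\setminus\overline{B}$ (this is exactly where piecewise analyticity enters and where the null-measure overlap $|U\cap D_2|=0$ is seen to be harmless, since it forces the $L_2$-source to vanish a.e.\ on $U$). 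Your caveats about matching the setup of \cite{Geb08} are well placed; the paper resolves them by carrying out the adaptation explicitly rather than by citation.
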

\begin{proof}
The proof is a slight adaptation of the one in \cite[Sect.~2.2]{Geb08}, see also 
\cite{Har09sim} for the general approach.
% Since $\partial \Omega$ is a set of zero measure,
%integrals with domain $D_j$ agree with that over $D_j\cap \Omega$.
\begin{enumerate}
\item[(a)] \emph{Reformulation as range (non-)inclusion}

We define the \emph{virtual measurement} operators $L_j$ ($j=1,2$) by
\[
L_j:\; L^2(D_j)^n \to  L_\diamond^2(\partial \Omega), \qquad F\mapsto u|_{\partial \Omega},
\]
where $u\in H_\diamond^1(\Omega)$ solves 
\begin{equation*}
\int_\Omega \sigma \nabla u \cdot \nabla w \dx =\int_{D_j} F\cdot \nabla w \dx 
\quad \mbox{ for all } w\in H_\diamond^1(\Omega).
\end{equation*}
Note that this implies $\nabla\cdot \sigma \nabla u=0$ in $\Omega \setminus \overline D_j$
and if $\overline D_j\subseteq \Omega$ then it also implies the homogeneous Neumann boundary
condition $\sigma \partial_\nu u|_{\partial \Omega}=0$.

It is easily checked that the dual operators
\[
L_j':\; L_\diamond^2(\partial \Omega) \to L^2(D_j)^n, \quad j=1,2
\]
are given by 
$L_j' g=\nabla v|_{D_j}$ where $v\in H_\diamond^1(B)$ solves
\begin{equation*} 
\nabla \cdot \sigma \nabla v=0 \quad \mbox{ and } \quad 
\sigma \partial_\nu v|_{\partial \Omega}=g.
\end{equation*}

Now the assertion is equivalent to the statement
\[
\nexists C>0 \ : \ \norm{L_1' g}\leq C \norm{L_2' g}\quad \forall g\in L^2_\diamond(\partial \Omega)
\]
which is (see e.g. \cite[Lemma 2.5]{Geb08}) equivalent to the range (non-)inclusion
\begin{equation}\labeq{LocPot_range_incl}
\range(L_1)\nsubseteq \range(L_2).
\end{equation}

\item[(b)] \emph{Proof of the range (non-)inclusion \req{LocPot_range_incl}}

Since $\inn D_1 \nsubseteq \out D_2$,
the set $\inn D_1$ must intersect one of the sets $U$ in the definition of the outer support 
of $\chi_{D_2}$. Hence, there exists a set $U\subset \overline \Omega\setminus \out D_2$ with $U$ (relatively) open in $\overline \Omega$, 
$U$ connected to $\partial \Omega$, and $U\cap D_1$ contains an open ball $B$. Possibly shrinking the ball we can assume that
$\overline B\subset \Omega$ and that $(U\cap \Omega)\setminus \overline B$ is connected.
%\footnote{$U$ connected $B_{2\epsilon}\subseteq U$ $\Longrightarrow U\setminus \overline B_\epsilon$ connected: %Proof in den Appendix?}

Let $L_B$ denote the virtual measurement operator corresponding to the ball $B$. Obviously, $B\subseteq D_1$ implies $\range(L_B)\subseteq \range(L_1)$, so that it suffices to prove that 
\begin{equation}\labeq{LocPot_range_incl2}
\range(L_B) \nsubseteq \range(L_2).
\end{equation}
To that end let $\varphi\in \range(L_B)\cap \range(L_2)$. Then there exist $u_B, u_2\in \Hd^1(\Omega)$
with $u_B|_{\partial \Omega}=\varphi=u_2|_{\partial \Omega}$, $\sigma \partial_\nu u_B|_{U\cap \partial \Omega}=0=\sigma \partial_\nu u_2|_{U\cap\partial \Omega}$, and
\begin{eqnarray*}
\nabla \cdot \sigma \nabla u_B&=&0 \quad \mbox{ in } \Omega\setminus \overline B,\\
\nabla \cdot \sigma \nabla u_2&=&0 \quad \mbox{ in } U.
\end{eqnarray*}
By unique continuation $u_B=u_2$ in $U\setminus \overline B$. Hence 
\[
u:=\left\{ \begin{array}{l l} u_B & \mbox{ in } \Omega\setminus \overline B,\\
u_2 & \mbox{ in } \overline B \end{array}\right.
\]
defines a function $u\in \Hd^1(\Omega)$ with $\nabla \cdot \sigma \nabla u=0$ in $\Omega$ and homogeneous Neumann boundary data $\sigma \partial_\nu u|_{\partial \Omega}=0$. It follows that $\varphi=u|_{\partial \Omega}=0$ and thus we have shown that $\range(L_B) \cap  \range(L_2)=\{ 0 \}$.

Finally, using unique continuation again, we obtain that $L_B'$ is injective, so that $\range(L_B)$ is dense in $\Ld^2(\partial \Omega)$. A fortiori, $\range(L_B)\neq \{ 0 \}$, which, together with $\range(L_B) \cap  \range(L_2)=\{ 0 \}$, proves \req{LocPot_range_incl2} and thus the assertion.
\end{enumerate}
\end{proof}

Note that theorem~\ref{thm:loc_pot} also holds for less regular conductivities as long as a unique continuation property is fulfilled, 
and that localized potentials can be constructed by solving regularized operator equations, cf.\ \cite{Geb08}.

We now show that (regardless of regularity) 
the properties of the localized potentials do not depend on the conductivity in the low energy region:
\begin{lemma}\label{lemma:loc_pot_remark}
Let $D_1,D_2\subseteq\overline\Omega$ be two measurable sets. 
Let $\sigma,\tau\in L^\infty_+(\Omega)$ and $u_m^\sigma,u_m^\tau\in \Hd^1(\Omega)$ denote the corresponing 
solutions of 
\begin{align*}
\nabla \cdot \sigma \nabla u^\sigma_m=0 & \quad \mbox{ in $\Omega$}, \qquad \sigma \partial_\nu u^\sigma_m|_{\partial \Omega}=g_m,\\ 
\nabla \cdot \tau \nabla u^\tau_m=0 & \quad \mbox{ in $\Omega$}, \qquad \tau \partial_\nu u^\tau_m|_{\partial \Omega}=g_m,
\end{align*}
for a sequence of boundary currents $(g_m)_{m\in\mathbb{N}}\subset L^2_\diamond(\partial \Omega)$.

If $\supp (\sigma-\tau)\subseteq D_2$ then 
\[
 \lim_{m\rightarrow\infty}\int_{D_1}|\nabla u^\sigma_m|^2\,\mathrm{d}x=\infty\quad\text{and}\quad\lim_{m\rightarrow\infty}\int_{D_2}|\nabla u^\sigma_m|^2\,\mathrm{d}x=0
\]
holds if and only if
\[
\lim_{m\rightarrow\infty}\int_{D_1}|\nabla u^\tau_m|^2\,\mathrm{d}x=\infty\quad\text{and}\quad\lim_{m\rightarrow\infty}\int_{D_2}|\nabla u^\tau_m|^2\,\mathrm{d}x=0.
\]
\end{lemma}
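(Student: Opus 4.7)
The plan is to compare the two solutions by studying their difference $w_m := u_m^\sigma - u_m^\tau \in H_\diamond^1(\Omega)$. Subtracting the variational formulations \eqref{eq:math_model_variationel} for $u_m^\sigma$ and $u_m^\tau$ (both with the same Neumann data $g_m$) and rewriting $\sigma \nabla u_m^\sigma - \tau \nabla u_m^\tau = \tau \nabla w_m + (\sigma-\tau)\nabla u_m^\sigma$ yields
\[
\int_\Omega \tau \nabla w_m \cdot \nabla v \dx = -\int_\Omega (\sigma-\tau) \nabla u_m^\sigma \cdot \nabla v \dx \quad \text{for all } v\in H_\diamond^1(\Omega).
\]
The key observation is that since $\supp(\sigma-\tau) \subseteq D_2$, the right-hand side only ``sees'' $\nabla u_m^\sigma$ on $D_2$, which is precisely the region where the localized potentials are supposed to be small.

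Next I would test with $v = w_m$, apply Cauchy--Schwarz to the right-hand side, and use the positive lower bound $\tau \geq \tau_0 > 0$ (since $\tau \in L^\infty_+(\Omega)$) together with $\|\nabla w_m\|_{L^2(D_2)} \leq \|\nabla w_m\|_{L^2(\Omega)}$ to obtain the energy estimate
\[
\|\nabla w_m\|_{L^2(\Omega)} \leq \frac{\|\sigma-\tau\|_{L^\infty}}{\tau_0}\, \|\nabla u_m^\sigma\|_{L^2(D_2)}.
\]
Consequently, whenever $\int_{D_2} |\nabla u_m^\sigma|^2\dx \to 0$, the difference $w_m$ tends to zero globally in the $H^1$-seminorm.

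The desired conclusions then follow from the (reverse) triangle inequality applied on each region. On $D_2$,
\[
\|\nabla u_m^\tau\|_{L^2(D_2)} \leq \|\nabla u_m^\sigma\|_{L^2(D_2)} + \|\nabla w_m\|_{L^2(\Omega)} \longrightarrow 0,
\]
while on $D_1$,
\[
\|\nabla u_m^\tau\|_{L^2(D_1)} \geq \|\nabla u_m^\sigma\|_{L^2(D_1)} - \|\nabla w_m\|_{L^2(\Omega)} \longrightarrow \infty,
\]
since $\|\nabla u_m^\sigma\|_{L^2(D_1)}\to \infty$ while $\|\nabla w_m\|_{L^2(\Omega)}$ is even uniformly bounded (it tends to $0$). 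The reverse implication is obtained by exchanging the roles of $\sigma$ and $\tau$, which is permitted because the hypothesis $\supp(\sigma-\tau) \subseteq D_2$ is symmetric in $\sigma$ and $\tau$.

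I do not anticipate any serious obstacle; the only subtle point is recognizing that the support restriction is exactly what is needed to bound the perturbation of the energy by the small quantity $\|\nabla u_m^\sigma\|_{L^2(D_2)}$, so that the roles of the high- and low-energy regions are preserved when switching the background conductivity.
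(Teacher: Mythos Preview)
Your proof is correct and takes a genuinely different, more elementary route than the paper's.

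The paper argues via the \emph{virtual measurement operators} $L_2^\sigma, L_2^\tau:\ L^2(D_2)^n\to L^2_\diamond(\partial\Omega)$ introduced in the proof of theorem~\ref{thm:loc_pot}. It shows the range inclusion $\range(L_2^\sigma)\subseteq \range(L_2^\tau)$ (by rewriting the variational identity so that $(\tau-\sigma)\nabla u^\sigma$ appears as an extra source supported in $D_2$), and then invokes the duality ``range inclusion $\Leftrightarrow$ adjoint norm bound'' to conclude $\int_{D_2}|\nabla u_m^\sigma|^2\le C\int_{D_2}|\nabla u_m^\tau|^2$. By symmetry the $D_2$-energies vanish simultaneously; the $D_1$-blow-up is then handled by repeating the argument on $D_1\cup D_2$.

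Your approach bypasses this operator-theoretic machinery entirely. By testing the equation for the difference $w_m=u_m^\sigma-u_m^\tau$ with $w_m$ itself you obtain the \emph{global} bound $\|\nabla w_m\|_{L^2(\Omega)}\le (\|\sigma-\tau\|_{L^\infty}/\tau_0)\,\|\nabla u_m^\sigma\|_{L^2(D_2)}$, from which both the $D_2$- and the $D_1$-statements follow at once via the triangle inequality. This is shorter, more quantitative (you actually see the constant), and avoids the separate treatment of $D_1\cup D_2$. The paper's route has the advantage of staying within the framework already set up for theorem~\ref{thm:loc_pot}, so no new estimate is needed; yours is self-contained and would work equally well for readers unfamiliar with the range-inclusion/adjoint-norm equivalence.
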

\begin{proof}
For both conductivities, $\sigma$ and $\tau$, we define the virtual measurement operators
\[
L_2^\sigma,L_2^\tau:\ L^2(D_2)^n \to  L_\diamond^2(\partial \Omega), 
\]
as in the proof of theorem \ref{thm:loc_pot}. If $u^\sigma|_{\partial \Omega} = L_2^\sigma F$ with $F\in L^2(D_2)$ and a solution $u^\sigma\in H_\diamond^1(\Omega)$
of 
\begin{equation*}
\int_\Omega \sigma \nabla u^\sigma \cdot \nabla w \dx =\int_{D_2} F\cdot \nabla w \dx 
\quad \mbox{ for all } w\in H_\diamond^1(\Omega).
\end{equation*}
then $u^\sigma$ also solves
\begin{equation*}
\int_\Omega \tau \nabla u^\sigma \cdot \nabla w \dx =\int_{D_2} \left( F + (\tau - \sigma) \nabla u^\sigma \right)\cdot \nabla w \dx 
\quad \mbox{ for all } w\in H_\diamond^1(\Omega).
\end{equation*}
This shows that $\range(L_2^\sigma)\subseteq \range(L_2^\tau)$. As in the proof of theorem \ref{thm:loc_pot}, this implies that
\[
\exists C>0:\ \int_{D_2}|\nabla u^\sigma_m|^2\mathrm{d}x \leq C \int_{D_2}|\nabla u^\tau_m|^2\,\mathrm{d}x \quad \mbox{ for all $m\in \N$.}
\]
By interchanging $\sigma$ and $\tau$, we obtain that
\[
\lim_{m\rightarrow\infty}\int_{D_2}|\nabla u^\sigma_m|^2\,\mathrm{d}x=0 \quad \Longleftrightarrow \quad
\lim_{m\rightarrow\infty}\int_{D_2}|\nabla u^\tau_m|^2\,\mathrm{d}x=0.
\]
Using the same argument on $D_1\cup D_2$ it follows that also 
\[
\lim_{m\rightarrow\infty}\int_{D_1\cup D_2}|\nabla u^\sigma_m|^2\,\mathrm{d}x=\infty \quad \Longleftrightarrow \quad
\lim_{m\rightarrow\infty}\int_{D_1\cup D_2}|\nabla u^\tau_m|^2\,\mathrm{d}x=\infty,
\]
so that the assertion follows.
\end{proof}

\begin{remark}
Localized potentials can be numerically constructed by solving regularized operator equations (see \cite{Geb08}), and they can be used to probe for an unknown inclusion in the spirit of 
the probe or needle method, cf.\ e.g. \cite{ikehata2005new,ikehata2007probe}. We briefly sketch the idea on a simple test example. Assume that the conductivity is $\sigma=1+\chi_D$ and that $(g_m)_{m\in \N}$ is a sequence such that the 
solutions $(u_m)_{m\in \mathbb{N}}\subset \Hd^1(\Omega)$ of 
$\Delta u_m=0$ and $\partial_\nu u_m|_{\partial \Omega}=g_m$ fulfill
\[
 \lim_{m\rightarrow\infty}\int_{D_1}|\nabla u_m|^2\,\mathrm{d}x=\infty\quad\text{and}\quad\lim_{m\rightarrow\infty}\int_{D_2}|\nabla u_m|^2\,\mathrm{d}x=0.
\]
Then the monotonicity estimate in lemma \ref{le:monotonicity} yields that
\begin{alignat*}{3}
D & \subseteq D_2 & & \quad \text{ implies } \quad & \left| \langle g_m, (\Lambda(1)-\Lambda(\sigma)) g_m\rangle \right| & \to 0,\\
D_1 & \subseteq D & & \quad \text{ implies } \quad  & \left| \langle g_m, (\Lambda(1)-\Lambda(\sigma)) g_m\rangle \right| & \to \infty.
\end{alignat*}
Choosing $D_2$ to cover most of $\Omega$ and $D_1$ to be, e.g., a small ball inside $\Omega\setminus D_2$, one may thus estimate the shape of $D$ by slowly shrinking $D_2$. 

Such an algorithm would however suffer from high computational cost (to construct a high number of localized potentials) and it is not clear how to check the limit of 
$\langle g_m, (\Lambda(1)-\Lambda(\sigma)) g_m\rangle$ in a numerically stable way.
Furthermore, the choice of the sets $D_1$ and $D_2$ would certainly impose some geometrical restrictions on the shapes of inclusions that can be recovered. 

In the following, we take a different approach. The monotonicity methods derived in the next section do not require the numerical construction of localized potentials. We will only require the above abstract existence results for localized potentials 
%(theorem~\ref{thm:loc_pot} and lemma~\ref{lemma:loc_pot_remark}) 
in order to show that simple monotonicity tests recover the true (outer) shape
of an inclusion. 
\end{remark}

\section{Monotonicity based shape reconstruction}\label{sect:reconstruction}

\subsection{The definite case}\label{subsect:definite}

We will now show how the shape reconstruction problem can be solved via simple monotonicity tests. 
We start with the definite case, in which the inclusions conductivity is everywhere higher or everywhere lower than the background. 
We treat this case separately since it allows a particularly simple reconstruction strategy. Given a small ball 
the following theorems show how to check whether the ball belongs to the inclusion or not.
The proofs of the theorems  are postponed until the end of this subsection. The main idea of this subsection has previously been summarized in the extended conference abstract \cite{HU10}.

\begin{theorem}\label{th:main_th1}
Let $\sigma\in L^\infty_+(\Omega)$ and $\sigma\geq 1$. 

For every open ball $B:=B_\epsilon(z)$ and every $\alpha>0$,
\begin{alignat*}{2}
\alpha \chi_{B}&\leq \sigma-1 \quad & \mbox{ implies } \quad \Lambda(1+\alpha \chi_B)\geq \Lambda(\sigma),\\
B&\nsubseteq \outsupp (\sigma-1)
\quad & \mbox{ implies } \quad 
\Lambda(1+\alpha \chi_{B})\ngeq \Lambda(\sigma).
\end{alignat*}

Hence, the set
\[
 R:=\bigcup_{\alpha>0} \left\lbrace B=B_\epsilon(z)\subseteq\Omega\ : \ \Lambda\left(1+\alpha\chi_{B}\right)\geq\Lambda\left(\sigma\right)\right\rbrace
\]
fulfills
\[
\innsupp (\sigma-1)\subseteq R\subseteq\outsupp (\sigma-1).
\]
\end{theorem}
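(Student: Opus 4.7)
My plan is to prove the two pointwise implications separately and then read off the bracketing of $R$.

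The first implication is just the baseline monotonicity: $\alpha\chi_B\leq \sigma-1$ rewrites as $1+\alpha\chi_B\leq \sigma$ pointwise, and Corollary~\ref{Cor:monotonicity} then gives $\Lambda(1+\alpha\chi_B)\geq \Lambda(\sigma)$ immediately.

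For the second implication I plan to combine the right-hand inequality of Lemma~\ref{le:monotonicity} with a localized potential concentrated in a small piece of $B$ that lies outside $\outsupp(\sigma-1)$. Assume $B\not\subseteq \outsupp(\sigma-1)$; since $\outsupp(\sigma-1)$ is closed by Lemma~\ref{lemma:support_properties}(a) and $B$ is open, I can pick an open ball $B'$ with $\overline{B'}\subseteq B\setminus \outsupp(\sigma-1)$, where necessarily $\sigma\equiv 1$ a.e.\ (since $\supp(\sigma-1)\subseteq \outsupp(\sigma-1)$ by Lemma~\ref{lemma:support_properties}(c)). Setting $D_1:=\overline{B'}$ and $D_2:=\supp(\sigma-1)$, one has $\inn D_1 = B'\nsubseteq \outsupp(\sigma-1)=\out D_2$. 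Theorem~\ref{thm:loc_pot}, applied with the trivially piecewise-analytic reference $\tau\equiv 1$ and transferred via Lemma~\ref{lemma:loc_pot_remark} (whose hypothesis $\supp(\sigma-1)\subseteq D_2$ is tautological), then yields currents $(g_m)$ whose $\sigma$-potentials $u_m:=u^{g_m}_\sigma$ satisfy
\[
\int_{D_1}|\nabla u_m|^2\dx\to\infty, \qquad \int_{D_2}|\nabla u_m|^2\dx\to 0.
\]
With $\sigma_1=1+\alpha\chi_B$ and $\sigma_2=\sigma$ in Lemma~\ref{le:monotonicity} the lower bound
\[
\Langle g_m,(\Lambda(\sigma)-\Lambda(1+\alpha\chi_B))g_m\Rangle \geq \int_\Omega \frac{\sigma}{1+\alpha\chi_B}\bigl(1+\alpha\chi_B-\sigma\bigr)|\nabla u_m|^2\dx
\]
holds, and I would split the integral over the four disjoint regions $B\setminus\supp(\sigma-1)$, $\Omega\setminus(B\cup\supp(\sigma-1))$, $(\Omega\setminus B)\cap\supp(\sigma-1)$, and $B\cap\supp(\sigma-1)$. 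On the first the integrand equals $\tfrac{\alpha}{1+\alpha}|\nabla u_m|^2$ and contributes at least $\tfrac{\alpha}{1+\alpha}\int_{D_1}|\nabla u_m|^2\to\infty$; on the second it vanishes; on the third it equals $\sigma(1-\sigma)|\nabla u_m|^2\leq 0$ (using $\sigma\geq 1$); and on the fourth it has indefinite sign but magnitude at most $C\,|\nabla u_m|^2$ for a constant $C=C(\|\sigma\|_\infty,\alpha)$. Since the last two regions lie inside $D_2$, their total contribution is $O\bigl(\int_{D_2}|\nabla u_m|^2\bigr)\to 0$, so the right-hand side tends to $+\infty$ and is in particular strictly positive for large $m$, proving $\Lambda(1+\alpha\chi_B)\not\geq \Lambda(\sigma)$.

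The bracketing of $R$ then follows readily: every $x\in\innsupp(\sigma-1)$ lies in an open ball $B\subseteq\Omega$ with positive essential infimum $\delta$ of $\sigma-1$, so $\delta\chi_B\leq\sigma-1$ and the first implication places $B$ (hence $x$) in $R$; conversely any $B\subseteq R$ admits some $\alpha>0$ with $\Lambda(1+\alpha\chi_B)\geq\Lambda(\sigma)$, and the contrapositive of the second implication forces $B\subseteq \outsupp(\sigma-1)$, so $R\subseteq\outsupp(\sigma-1)$. The main obstacle will be the sign analysis on $B\cap\supp(\sigma-1)$, where the integrand is of indefinite sign and could a priori spoil the lower bound; the saving point is that this region sits inside $D_2$ on which the localized potential drives $\int|\nabla u_m|^2$ to zero, while $\sigma\in L^\infty_+$ gives the uniform pointwise bound on the integrand needed to convert this into a negligible contribution, leaving the manifestly positive contribution on $D_1\subseteq B\setminus\supp(\sigma-1)$ (where $\sigma\equiv 1$) to blow up unchallenged.
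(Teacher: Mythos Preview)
Your argument is correct. The overall strategy matches the paper's---monotonicity for the first implication, a localized potential fed through Lemma~\ref{le:monotonicity} for the second---but the implementation differs in three respects. First, the paper applies Theorem~\ref{thm:loc_pot} directly to the piecewise-analytic test conductivity $1+\alpha\chi_B$, whereas you apply it to the homogeneous reference $\tau\equiv 1$ and then invoke Lemma~\ref{lemma:loc_pot_remark} to transfer the localized-potential property over to the $\sigma$-solutions. Second, the paper uses the \emph{upper} inequality of Lemma~\ref{le:monotonicity} (with $\sigma_2=1+\alpha\chi_B$), which gives the simpler integrand $(\sigma-1-\alpha\chi_B)|\nabla u_m|^2$; you use the lower inequality (with $\sigma_2=\sigma$), picking up the extra factor $\sigma/(1+\alpha\chi_B)$. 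Third, the paper first shrinks $B$ so that it sits entirely inside $\Omega\setminus\outsupp(\sigma-1)$ (permissible because shrinking only increases $\Lambda(1+\alpha\chi_B)$), which forces $B\cap\supp(\sigma-1)=\emptyset$ and reduces the analysis to a two-region split; you keep $B$ and instead select a sub-ball $B'$, at the cost of a four-region split and the separate bounding of the indefinite contribution on $B\cap\supp(\sigma-1)$. Both routes are valid; the paper's is a bit more economical (no transfer lemma, cleaner integrand, fewer regions), while yours avoids the preliminary shrinking trick but needs Lemma~\ref{lemma:loc_pot_remark} and a more careful case analysis.
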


\begin{example}\label{ex:main_th1}
Let $\sigma=1+\chi_D$ where the inclusion $D$ is open, and $\overline D\subset \Omega$ has a connected complement.
Then for every open ball $B\subseteq \Omega$
\[
B\subseteq D \quad \mbox{ if and only if } \quad \Lambda(1+\chi_B)\geq \Lambda(\sigma).
\]
\end{example}

Note that implementing the monotonicity tests in theorem~\ref{th:main_th1} or example~\ref{ex:main_th1} would be computationally expensive
since for each ball $B$ (and possibly also for each test level $\alpha$) we would have to 
solve the EIT equation with a new inhomogeneous conductivity in order to calculate $\Lambda(1+\alpha \chi_B)$.
The following theorem shows that we can replace the tests by linearized versions, that
do not require such inhomogeneous forward solutions. Since this is a bit counterintuitive, let us stress that the following result is not affected
by the linearization error, no matter how large that may be. The linearized inverse problem in EIT still contains the exact shape information, cf.\ \cite{HS10}.

\begin{theorem}\label{th:main_th1_lin}
Let $\sigma\in L^\infty_+(\Omega)$ and $\sigma\geq 1$. 

For every open ball $B:=B_\epsilon(z)$ and every $\alpha>0$,
\begin{alignat*}{2}
\alpha \chi_{B}&\leq \frac{1}{\sigma}(\sigma - 1) \quad & \mbox{ implies } \quad \Lambda(1)+\alpha \Lambda'(1) \chi_B\geq \Lambda(\sigma),\\
B&\nsubseteq \outsupp (\sigma-1)
\quad & \mbox{ implies } \quad 
\Lambda(1)+\alpha \Lambda'(1) \chi_B\ngeq \Lambda(\sigma).
\end{alignat*}

Hence, the set
\[
 R:=\bigcup_{\alpha>0} \left\lbrace B=B_\epsilon(z)\subseteq\Omega\,:\,
\Lambda(1)+\alpha \Lambda'(1) \chi_B\geq \Lambda(\sigma)\right\rbrace
\]
fulfills
\[
\innsupp (\sigma-1)\subseteq R\subseteq\outsupp (\sigma-1).
\]
\end{theorem}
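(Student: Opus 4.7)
The plan is to split the theorem into its two implications and handle the easy monotonicity direction via the linearized monotonicity estimate already established (Corollary~\ref{Cor:monotonicity_linearized}), and the harder converse direction via localized potentials (Theorem~\ref{thm:loc_pot}) applied with the constant background conductivity $1$. The final assertion about $R$ then follows by taking a union over $\alpha>0$ and $B$, essentially as in the structure that theorem~\ref{th:main_th1} uses to prove its analogous statement.

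For the first (sufficiency) implication, I would start from Corollary~\ref{Cor:monotonicity_linearized}, which gives
\[
\Lambda(1)-\Lambda(\sigma)\geq \Lambda'(1)\!\left(\tfrac{1}{\sigma}(1-\sigma)\right) = -\Lambda'(1)\!\left(\tfrac{1}{\sigma}(\sigma-1)\right).
\]
Assuming $\alpha\chi_B\leq \tfrac{1}{\sigma}(\sigma-1)$, the Fréchet monotonicity \req{Frechet_monotonicity} yields $\Lambda'(1)(\alpha\chi_B)\geq \Lambda'(1)\!\left(\tfrac{1}{\sigma}(\sigma-1)\right)$, and combining both inequalities gives $\Lambda(1)+\alpha\Lambda'(1)\chi_B\geq \Lambda(\sigma)$. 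This is a short one-liner once the corollary is in place.

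The substantive part is the contrapositive in the second implication. Suppose $B\nsubseteq \outsupp(\sigma-1)$. By Lemma~\ref{lemma:support_properties}(d) this reads $\inn B\nsubseteq \out(\supp(\sigma-1))$, so Theorem~\ref{thm:loc_pot} applied with the piecewise analytic constant conductivity $1$, $D_1:=B$ and $D_2:=\supp(\sigma-1)$ produces boundary currents $(g_m)$ with harmonic potentials $u_m$ satisfying $\int_B|\nabla u_m|^2\dx\to\infty$ and $\int_{\supp(\sigma-1)}|\nabla u_m|^2\dx\to 0$. The definition of $\Lambda'(1)$ gives $\langle g_m,\Lambda'(1)\chi_B\,g_m\rangle=-\int_B|\nabla u_m|^2\dx$, and the first inequality in Lemma~\ref{le:monotonicity} (with $\sigma_1=\sigma$, $\sigma_2=1$, so that $u_2=u_m$) gives
\[
\langle g_m,(\Lambda(1)-\Lambda(\sigma))g_m\rangle\leq \int_\Omega (\sigma-1)|\nabla u_m|^2\dx = \int_{\supp(\sigma-1)}(\sigma-1)|\nabla u_m|^2\dx \to 0,
\]
using $\sigma\in L^\infty_+(\Omega)$. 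Adding $-\alpha\int_B|\nabla u_m|^2\dx$ drives $\langle g_m,(\Lambda(1)+\alpha\Lambda'(1)\chi_B-\Lambda(\sigma))g_m\rangle\to -\infty$, which refutes $\Lambda(1)+\alpha\Lambda'(1)\chi_B\geq \Lambda(\sigma)$.

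The main obstacle is conceptual rather than technical: one must notice that in the linearized test $\Lambda'(1)$ is taken at the known background $1$ (not at the unknown $\sigma$), so the localized potentials only need to be constructed for the \emph{harmonic} problem, for which piecewise analyticity is trivial. The monotonicity estimate then absorbs all dependence on the unknown $\sigma$ into an integral of $(\sigma-1)|\nabla u_m|^2$ over $\supp(\sigma-1)$, which the localized potentials kill. Finally, the set equality $\innsupp(\sigma-1)\subseteq R\subseteq \outsupp(\sigma-1)$ follows by observing that if $B\subseteq \innsupp(\sigma-1)$ then $\mathrm{ess\,inf}_B(\sigma-1)>0$, so choosing $\alpha$ smaller than this essential infimum divided by $\|\sigma\|_\infty$ makes $\alpha\chi_B\leq \tfrac{1}{\sigma}(\sigma-1)$ hold and puts $B$ into $R$, while the second implication prevents $R$ from reaching outside $\outsupp(\sigma-1)$.
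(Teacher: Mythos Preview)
Your proposal is correct and follows essentially the same route as the paper: the first implication via the linearized monotonicity estimate (the paper applies Lemma~\ref{le:monotonicity} directly, you route through Corollary~\ref{Cor:monotonicity_linearized} and \req{Frechet_monotonicity}, which is equivalent), and the second via localized potentials for the harmonic background combined with the upper bound from Lemma~\ref{le:monotonicity}. The only minor difference is that the paper first shrinks $B$ into $\Omega\setminus\outsupp(\sigma-1)$ (using \req{Frechet_monotonicity}) so that the integral $\int_\Omega(\sigma-1-\alpha\chi_B)|\nabla u|^2\dx$ splits cleanly, whereas you bound the $\Lambda(1)-\Lambda(\sigma)$ and $\alpha\Lambda'(1)\chi_B$ contributions separately and thus avoid the shrinking step; both variants work.
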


\begin{example}\label{ex:main_th1_linearized}
Let $\sigma=1+\chi_D$ where the inclusion $D$ is open, and $\overline D\subset \Omega$ has a connected complement.
Then for every ball $B=B_\epsilon(z)$
\[
B\subseteq D \quad \mbox{ if and only if } \quad \Lambda(1)+{\textstyle \frac{1}{2}} \Lambda'(1) \chi_B\geq \Lambda(\sigma).
\]
\end{example}

\emph{Proof of theorem~\ref{th:main_th1}.} 
Let $\sigma\in L^\infty_+(\Omega)$, $\sigma\geq 1$. Let $B=B_\epsilon(z)$ and $\alpha>0$.
Corollary~\ref{Cor:monotonicity} yields that 
\begin{alignat*}{2}
\alpha \chi_{B}\leq \sigma-1 \quad & \mbox{ implies } & \quad \Lambda(1+\alpha \chi_B)\geq \Lambda(\sigma).
\intertext{It remains to show that}
B\nsubseteq \outsupp (\sigma-1)
\quad & \mbox{ implies } & \quad 
\Lambda(1+\alpha \chi_{B})\ngeq \Lambda(\sigma).
\end{alignat*}
Let $B\nsubseteq \outsupp (\sigma-1)$. 
Corollary~\ref{Cor:monotonicity} yields that shrinking 
the ball $B$ only makes $\Lambda(1+\alpha \chi_{B})$ larger, so that we can assume w.l.o.g.\ that
\[
B\subseteq \Omega\setminus  \outsupp (\sigma-1).
\]

We have that $1+\alpha \chi_B$ is piecewise analytic,
\[
B=\inn B \quad \mbox{ and } \quad \outsupp (\sigma-1)=\out (\supp (\sigma-1))
\]
(see lemma \ref{lemma:support_properties}(d)). Hence, we can apply theorem \ref{thm:loc_pot} and obtain a sequence of currents
$(g_m)_{m\in\mathbb{N}}\subset L^2_\diamond(\partial \Omega)$ so that the 
solutions $(u_m)_{m\in\mathbb{N}}\subset \Hd^1(\Omega)$ of
\[
\nabla \cdot (1+\alpha \chi_B) \nabla u_m=0 \quad \mbox{ in $\Omega$}, \qquad (1+\alpha \chi_B) \partial_\nu u_m|_{\partial \Omega}=g_m, 
\]
fulfill
\[
 \lim_{m\rightarrow\infty}\int_{B}|\nabla u_m|^2\,\mathrm{d}x=\infty\quad\text{and}\quad\lim_{m\rightarrow\infty}\int_{\supp (\sigma-1)}|\nabla u_m|^2\,\mathrm{d}x=0.
\]
From lemma~\ref{le:monotonicity} it follows that
\begin{align*}
%\lefteqn{
\Langle g_m, \left(\Lambda(1+\alpha \chi_B)-\Lambda(\sigma)\right)g_m \Rangle
& \leq \int_\Omega (\sigma-1-\alpha \chi_B)|\nabla u_m|^2\dx\\
& = -\alpha \int_B |\nabla u_m|^2\dx
+ \int_{\supp (\sigma-1)} (\sigma-1)|\nabla u_m|^2\dx\\
& \to -\infty,
\end{align*}
and hence $\Lambda(1+\alpha \chi_B)\ngeq \Lambda(\sigma)$.\hfill $\Box$

%%%%%%%%%%%

\emph{Proof of Theorem~\ref{th:main_th1_lin}.} 
Let $\sigma\in L^\infty_+(\Omega)$, $\sigma\geq 1$. Let $B=B_\epsilon(z)$ and $\alpha>0$.

For every $g\in L^2_\diamond(\partial \Omega)$ and solution $u\in \Hd^1(\Omega)$ of
\[
\Delta u=0 \quad \mbox{ in $\Omega$}, \qquad \partial_\nu u|_{\partial \Omega}=g, 
\]
we obtain from lemma~\ref{le:monotonicity}
\begin{align*}
\Langle g, \left( \Lambda(1)+\alpha \Lambda'(1) \chi_B - \Lambda(\sigma) \right)g\Rangle
\geq \int_\Omega \left(\frac{1}{\sigma}(\sigma-1) - \alpha \chi_B\right) |\nabla u|^2\dx.
\end{align*}
This shows that
\begin{alignat*}{2}
\alpha \chi_{B}\leq \frac{1}{\sigma}(\sigma-1) \quad & \mbox{ implies } & \quad 
\Lambda(1)+\alpha \Lambda'(1) \chi_B \geq  \Lambda(\sigma).
\intertext{It remains to show that}
B\nsubseteq \outsupp (\sigma-1)
\quad & \mbox{ implies } & \quad 
\Lambda(1)+\alpha \Lambda'(1) \chi_B \ngeq  \Lambda(\sigma).
\end{alignat*}
To show this let $B\nsubseteq \outsupp (\sigma-1)$. 
The linearized monotonicity relation \req{Frechet_monotonicity} yields that shrinking 
the ball $B$ only makes $\Lambda(1)+\alpha \Lambda'(1) \chi_{B}$ larger, so that we can assume w.l.o.g.\ that
$B\subseteq \Omega\setminus  \outsupp (\sigma-1)$. Then, 
\begin{align*}
\lefteqn{\Langle g, \left( \Lambda(1)+\alpha \Lambda'(1) \chi_B - \Lambda(\sigma) \right)g\Rangle}\\
&\leq \int_\Omega \left( \sigma-1-\alpha \chi_B \right) |\nabla u|^2 \dx%\\
 = -\alpha \int_B |\nabla u|^2\dx
 + \int_{\supp (\sigma-1)} (\sigma-1) |\nabla u|^2\dx,
\end{align*}
so that the assertion follows using localized potentials for the 
background conductivity $1$ and the same sets as in theorem \ref{thm:loc_pot}.\hfill $\Box$

\begin{remark}\label{rem:def_othercase}
If $\sigma\in L^\infty_+(\Omega)$ and $\sigma\leq 1$ then we obtain with the same arguments that for 
every open ball $B\subseteq \Omega$ and every $0<\alpha<1$,
\begin{alignat*}{2}
\alpha \chi_{B}&\leq 1-\sigma    \quad & \mbox{ implies } & \quad \Lambda(1-\alpha \chi_B)\leq \Lambda(\sigma),\\
B&\nsubseteq \outsupp (\sigma-1) \quad & \mbox{ implies } & \quad \Lambda(1-\alpha \chi_{B})\nleq \Lambda(\sigma),
\intertext{and for every open ball $B\subseteq \Omega$ and every $\alpha>0$}
\alpha \chi_{B}&\leq 1-\sigma    \quad & \mbox{ implies } & \quad \Lambda(1)-\alpha \Lambda'(1) \chi_B\leq \Lambda(\sigma),\\
B&\nsubseteq \outsupp (\sigma-1) \quad & \mbox{ implies } & \quad \Lambda(1)-\alpha \Lambda'(1) \chi_B\nleq \Lambda(\sigma).
\end{alignat*}
%Note: the linearized assertions follow from
%\begin{equation}\label{eq:monotonicity_inequality}
%\Langle g, \left(\Lambda(1)-\alpha\Lambda'(1)\chi_B-\Lambda(\sigma)\right)g \Rangle \leq \int_\Omega(\sigma-1+\alpha \chi_B) |\nabla u|^2 \dx 
%\end{equation}
\end{remark}

\begin{remark}
An inspection of the proofs shows that the balls can be replaced by
arbitrary measurable sets $B$ with non-empty interior in theorem~\ref{th:main_th1_lin} (and the second part of
remark \ref{rem:def_othercase}). For theorem~\ref{th:main_th1} (and the first part of remark~\ref{rem:def_othercase}) the sets $B$ 
must additionally possess a piecewise smooth boundary (so that $1+\alpha\chi_B$ remains piecewise analytic). We comment on further generalizations
in section \ref{subsect:extensions}.
\end{remark}

\subsection{The indefinite case}

We now consider the general indefinite case where $\sigma$ is no longer required to be everywhere larger or everywhere smaller
than the background conductivity $1$. Instead of testing whether a small test region is part of the unknown 
inclusion, we will now test whether a large test region contains the unknown inclusions.

The main idea is the following. Consider a large test region $C$ with connected complement. If $C$ overlaps the inclusions
then a large enough, resp., small enough test conductivity on $C$ will make the corresponding test NtD smaller, resp., larger then the measured NtD.
Hence if $C$ overlaps the inclusions then two monotonicity tests (one with a large and one with a small test level on $C$) hold true.
On the other hand, if $C$ does not overlap the inclusions then we can connect the non-overlapped part with the boundary,
and construct a localized potential with large energy in the non-overlapped part and small energy in $C$. 
Depending on whether the conductivity is larger, resp., smaller than the background in the non-overlapped part, 
this localized potential shows that one of the monotonicity tests cannot hold true.

However, for this argument we need a \emph{local definiteness property}. 
If a conductivity differs from the background then there must either be a neighborhood of the boundary where it differs from the background in the positive direction, or a neighborhood where it differs in the negative direction. Note that even $C^\infty$-conductivities might oscillate infinitely 
and thus violate this property. This property holds, however, if the conductivity is either piecewise analytic 
or if the higher-conductivity and lower-conductivity parts have some distance from each other, and the inner support does not
deviate too much from the true support (which already holds, e.g., for piecewise continuous functions, see corollary \ref{cor:exakt}).

More precisely, we assume that $\sigma\in L^\infty_+(\Omega)$ is either piecewise-analytic, or  
\begin{equation}\labeq{indef_separate}
\supp (\sigma-1)^+ \,\cap\, \supp (\sigma-1)^- =\emptyset,\quad  \overline{\innsupp (\sigma-1)}=\supp (\sigma-1)
\end{equation}
where $(\sigma-1)^+:=\max \{\sigma-1,0\}, \quad (\sigma-1)^-:=\min \{\sigma-1,0\}$.

\begin{theorem}\label{th:main_th_disjoint}
Let $\sigma\in L^\infty_+(\Omega)$ either be piecewise-analytic or fufill \req{indef_separate}.

Then, for every set $C\subseteq \overline\Omega$ with $C=\out C$ and every $\alpha>1$ 
\begin{alignat*}{2}
1 -  {\TS \frac{1}{\alpha}} \chi_{C}&\leq \sigma \quad & \mbox{ implies } & \quad \Lambda(1 - {\TS \frac{1}{\alpha}} \chi_{C})\geq \Lambda(\sigma),\\
1 + \alpha \chi_{C}&\geq \sigma \quad & \mbox{ implies } & \quad \Lambda(1+\alpha \chi_C)\leq \Lambda(\sigma)
\end{alignat*}
and
\[ 
\Lambda(1+\alpha \chi_C) \leq \Lambda(\sigma)\leq \Lambda(1 - {\TS \frac{1}{\alpha}} \chi_{C})
\quad \mbox{ implies } \quad \outsupp (\sigma-1)\subseteq C.
\]

Hence, 
\begin{align*}
 R& := \bigcap \{ C=\out C\subseteq \overline\Omega,\ 
%\\  & \qquad \qquad 
\exists \alpha>1 :\ 
\Lambda(1+\alpha \chi_C) \leq \Lambda(\sigma)\leq \Lambda(1 - {\TS \frac{1}{\alpha}} \chi_{C})
\}
\end{align*}
fulfills
$
R=\outsupp (\sigma-1).
$
\end{theorem}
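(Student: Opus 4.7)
The first two implications are immediate consequences of the elementary monotonicity relation of Corollary~\ref{Cor:monotonicity}: the pointwise inequalities $1 - \frac{1}{\alpha}\chi_C \leq \sigma$ and $\sigma \leq 1 + \alpha\chi_C$ translate directly into the claimed NtD-operator inequalities. The set identity $R = \outsupp(\sigma-1)$ will then drop out once the third implication is established: the third implication gives $\outsupp(\sigma-1) \subseteq R$, and the choice $C = \outsupp(\sigma-1)$ (which satisfies $C = \out C$ by Lemma~\ref{lemma:support_properties}(d) and satisfies the two pointwise inequalities for any $\alpha$ sufficiently large) shows via the first two implications that $R \subseteq \outsupp(\sigma-1)$.

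For the third implication I argue by contrapositive. Assume $\outsupp(\sigma-1) \not\subseteq C$. I first claim this forces $\supp(\sigma-1) \not\subseteq C$: if $\supp(\sigma-1) \subseteq C$, then since $C = \out C$ every relatively open $U \subseteq \overline\Omega$ connected to $\partial\Omega$ with $U \cap C$ of measure zero has $\sigma = 1$ a.e.\ on $U$, so $\outsupp(\sigma-1) \subseteq C$, a contradiction. Pick $z \in \supp(\sigma-1) \setminus C$. The local definiteness hypothesis (either \req{indef_separate} combined with $\supp(\sigma-1) = \overline{\innsupp(\sigma-1)}$, or the piecewise analyticity of $\sigma$) lets me shift to a nearby point $z'$ and a small open ball $B \subset \Omega \setminus C$ around $z'$ on which either $\sigma \geq 1 + \epsilon$ (with $B \cap \supp(\sigma-1)^- = \emptyset$) or $\sigma \leq 1 - \epsilon$ (with $B \cap \supp(\sigma-1)^+ = \emptyset$), for some $\epsilon > 0$.

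I treat the first case; the second is symmetric and will contradict $\Lambda(\sigma) \leq \Lambda(1-\frac{1}{\alpha}\chi_C)$ rather than $\Lambda(1+\alpha\chi_C) \leq \Lambda(\sigma)$. Set $D_2 := C \cup \supp(\sigma-1)^-$; then $B \cap D_2 = \emptyset$. The plan is to apply Theorem~\ref{thm:loc_pot} to a suitable piecewise-analytic conductivity $\tau$ coinciding with $\sigma$ outside $D_2$ (so that $\supp(\sigma - \tau) \subseteq D_2$) and then invoke Lemma~\ref{lemma:loc_pot_remark} to transport the localized potentials to $\sigma$ itself, producing currents $(g_m)$ whose $\sigma$-potentials $u_m := u^{g_m}_\sigma$ satisfy $\int_B |\nabla u_m|^2 \to \infty$ and $\int_{D_2}|\nabla u_m|^2 \to 0$. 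The hypothesis $\inn B \not\subseteq \out D_2$ required by Theorem~\ref{thm:loc_pot} is the main obstacle of the proof, and it is precisely here that the full strength of the local definiteness hypothesis is used: the disjointness $\supp(\sigma-1)^+ \cap \supp(\sigma-1)^- = \emptyset$, together with $C = \out C$ and the fact that $B$ sits in the positive-support part, allows one to build a relatively open corridor from $\partial\Omega$ to $B$ that avoids both $C$ and $\supp(\sigma-1)^-$.

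Once the localized potentials are in hand, Lemma~\ref{le:monotonicity} with $\sigma_1 = 1+\alpha\chi_C$ and $\sigma_2 = \sigma$ yields
\[
\Langle g_m, (\Lambda(\sigma) - \Lambda(1+\alpha\chi_C)) g_m \Rangle \leq \int_\Omega (1+\alpha\chi_C - \sigma) |\nabla u_m|^2 \dx.
\]
Splitting the right-hand side over $B$, $D_2$ and the remainder $\Omega \setminus (B \cup D_2)$: on $B$ the integrand is $1 - \sigma \leq -\epsilon$, giving a contribution $\leq -\epsilon \int_B |\nabla u_m|^2 \to -\infty$; on $D_2$ the integrand is bounded in absolute value by a constant, giving a contribution $\to 0$; on the remainder $\chi_C = 0$ and $\sigma \geq 1$ a.e.\ (since the region lies outside $\supp(\sigma-1)^-$), so the integrand $1 - \sigma$ is non-positive a.e.\ and contributes non-positively. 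Hence the right-hand side tends to $-\infty$, so $\Lambda(1+\alpha\chi_C) \not\leq \Lambda(\sigma)$, contradicting the hypothesis and completing the contrapositive.
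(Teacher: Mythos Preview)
Your proof has a genuine gap at the step you yourself flag as ``the main obstacle'': the claim that one can build a corridor from $\partial\Omega$ to $B$ avoiding both $C$ and $\supp(\sigma-1)^-$ is unjustified and in fact false. Take $C=\emptyset$ (which satisfies $C=\out C$) and $\sigma = 1 + \chi_{D^+} - \tfrac{1}{2}\chi_{D^-}$ with $D^+$ a ball enclosed by a disjoint spherical shell $D^-$, so that \req{indef_separate} holds. If your procedure picks $z\in D^+$ (nothing prevents this), then $B\subset D^+$ and $D_2 = \overline{D^-}$; but $\out D_2$ is the shell together with the region it encloses, so $B\subseteq \out D_2$ and the hypothesis of Theorem~\ref{thm:loc_pot} fails. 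The mere disjointness of the two sign-supports is of no help here.

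The paper never picks an arbitrary $z\in\supp(\sigma-1)\setminus C$. From $\outsupp(\sigma-1)\not\subseteq C=\out C$ it first extracts a relatively open $U\subseteq\overline\Omega\setminus C$ connected to $\partial\Omega$ with $\sigma|_U\not\equiv 1$, and then approaches \emph{from the boundary through $U$}: in the piecewise-analytic case via Corollary~\ref{cor:pcw_anal_def}, which directly delivers $D_1\not\subseteq\out D_2$ with $\Omega\setminus D_2\subseteq U$ and sign-definiteness on $\Omega\setminus D_2$; in the \req{indef_separate} case via a continuous path in $U$ from $\partial\Omega$, taking $D_1$ inside the \emph{first} small ball along the path that meets $\supp(\sigma-1)$ and $D_2$ as the complement of the balls up to that point, so that $D_1\not\subseteq\out D_2$ holds by construction and the small-radius choice forces a definite sign there. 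This ``first encounter from the boundary'' selection is the missing idea. A secondary problem: in the \req{indef_separate} case your auxiliary piecewise-analytic $\tau$ with $\tau=\sigma$ outside $D_2$ need not exist, since $\sigma$ is not assumed piecewise analytic on $\Omega\setminus D_2$. The paper sidesteps this by building the localized potentials for the \emph{test} conductivity $1+\alpha\chi_C$ (transferred from the constant conductivity $1$ via Lemma~\ref{lemma:loc_pot_remark}, using $C\subseteq D_2$) and applying Lemma~\ref{le:monotonicity} with those solutions, so that potentials for $\sigma$ itself are never needed.
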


We postpone the proof until the end of this subsection 
and first give an example and formulate the linearized version.

\begin{example}\label{ex:main_th_disjoint}
Let $\sigma=1+\chi_{D^+}-\frac{1}{2}\chi_{D^-}$ where $D^+,D^-\subseteq \Omega$ are open sets with
$\overline{D^+}\cap \overline{D^-}=\emptyset$, and $\overline{D^+}\cup \overline{D^-}\subset \Omega$ has a connected complement.

Then for every closed set $C\subset \Omega$ with connected complement $\Omega\setminus C$
\[
D^+ \cup D^-\subseteq C \quad \mbox{ if and only if } \quad \Lambda(1+\chi_C)\leq \Lambda(\sigma)\leq \Lambda(1-{\TS \frac{1}{2}}\chi_C).
\]
\end{example}

\begin{theorem}\label{th:main_th_disjoint_linearized}
Under the assumptions of theorem~\ref{th:main_th_disjoint} we have that for every set $C\subseteq \overline\Omega$ with $C=\out C$ and every $\alpha>0$ 
\begin{alignat*}{2}
1 - \alpha \chi_{C}&\leq  2 - {\TS \frac{1}{\sigma}} \quad & \mbox{ implies } & \quad \Lambda(1)-\alpha \Lambda'(1)\chi_C  \geq \Lambda(\sigma),\\
1 + \alpha \chi_{C}&\geq \sigma \quad & \mbox{ implies } & \quad \Lambda(1)+\alpha \Lambda'(1)\chi_C\leq \Lambda(\sigma)
\end{alignat*}
and
\[ 
\Lambda(1)+\alpha \Lambda'(1)\chi_C \leq \Lambda(\sigma) \leq \Lambda(1)-\alpha \Lambda'(1)\chi_C 
\quad \mbox{ implies } \quad \outsupp (\sigma-1)\subseteq C.
\]

Hence, 
\begin{align*}
 R& := \bigcap \{ C=\out C\subseteq \overline\Omega,\ 
\exists \alpha>0 :\\
 & \qquad \qquad \quad \Lambda(1)+\alpha \Lambda'(1)\chi_C \leq \Lambda(\sigma) \leq \Lambda(1)-\alpha \Lambda'(1)\chi_C \}
\end{align*}
fulfills
$
R=\outsupp (\sigma-1).
$
\end{theorem}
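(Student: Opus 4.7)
The plan is to prove the three implications in turn and then observe that the Hence follows by combining them with the direct check that $C=\outsupp(\sigma-1)$ itself lies in the intersection. For the two sufficient ``implies'' clauses I would apply Lemma~\ref{le:monotonicity} with $\sigma_1=\sigma$ and $\sigma_2=1$, taking $u=u^g_1$ to be the harmonic potential so that the bilinear form $\langle g,\Lambda'(1)\chi_C g\rangle = -\int_\Omega\chi_C|\nabla u|^2\dx$ is directly available. The two-sided estimate from Lemma~\ref{le:monotonicity} combined with this representation yields
\begin{align*}
\langle g,(\Lambda(1)-\alpha\Lambda'(1)\chi_C - \Lambda(\sigma))g\rangle
 & \geq \int_\Omega \big(1 - {\TS \frac{1}{\sigma}} + \alpha\chi_C\big)|\nabla u|^2\dx, \\
\langle g,(\Lambda(\sigma)-\Lambda(1) - \alpha\Lambda'(1)\chi_C)g\rangle
 & \geq \int_\Omega \big(1-\sigma+\alpha\chi_C\big)|\nabla u|^2\dx.
\end{align*}
The pointwise hypotheses $1-\alpha\chi_C\leq 2-1/\sigma$ and $1+\alpha\chi_C\geq\sigma$ make the respective integrands a.e.\ nonnegative, which proves the two sufficient implications at once.

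For the converse I would argue by contradiction, paralleling the proof of Theorem~\ref{th:main_th_disjoint} but with the background conductivity fixed at $1$. Assuming $\outsupp(\sigma-1)\not\subseteq C$, the identity $C=\out C$ first gives a relatively open $U\subseteq\overline\Omega$ that is connected to $\partial\Omega$, disjoint from $C$, and on which $\sigma-1$ is not a.e.\ zero. The local-definiteness property -- either from piecewise-analyticity on a subdomain of $U$, or from \req{indef_separate} using the disjointness of $\supp(\sigma-1)^+$ and $\supp(\sigma-1)^-$ together with $\overline{\innsupp(\sigma-1)}=\supp(\sigma-1)$ -- then produces a ball $B\subseteq U$ on which $\sigma$ is uniformly of one sign, either $\sigma\geq 1+c$ (Case~1) or $\sigma\leq 1-c$ (Case~2) for some $c>0$, with $B\cap\supp(\sigma-1)^\mp=\emptyset$ and $B$ still connected to $\partial\Omega$ through $\overline\Omega\setminus(C\cup\supp(\sigma-1)^\mp)$.

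In Case~1 I would then invoke Theorem~\ref{thm:loc_pot} with conductivity $1$, $D_1=B$ and $D_2=C\cup\supp(\sigma-1)^-$; the reachability $\inn B\not\subseteq\out D_2$ is exactly what the previous step produced. The resulting harmonic potentials $u_m$ with $\partial_\nu u_m|_{\partial\Omega}=g_m$ satisfy $\int_B|\nabla u_m|^2\dx\to\infty$ and $\int_{D_2}|\nabla u_m|^2\dx\to 0$, so substituting them into
\begin{equation*}
\langle g_m,(\Lambda(1)+\alpha\Lambda'(1)\chi_C - \Lambda(\sigma))g_m\rangle \geq \int_\Omega\big(1-{\TS\frac{1}{\sigma}}-\alpha\chi_C\big)|\nabla u_m|^2\dx
\end{equation*}
and splitting the right-hand side over $B$ (integrand $\geq c/(1+c)>0$), over $\Omega\setminus(B\cup D_2)$ (where $\chi_C=0$ and $\sigma\geq 1$, so the integrand is $\geq 0$) and over $D_2$ (bounded integrand, integral $\to 0$) forces the expression to $+\infty$, contradicting $\Lambda(1)+\alpha\Lambda'(1)\chi_C\leq\Lambda(\sigma)$. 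Case~2 is symmetric, using $D_2=C\cup\supp(\sigma-1)^+$ to contradict the other test. The Hence then combines this with the observation that $C:=\outsupp(\sigma-1)$ satisfies $C=\out C$ by Lemma~\ref{lemma:support_properties}(d) and, for $\alpha$ sufficiently large, trivially fulfills both pointwise hypotheses (outside $C$ because $\sigma=1$ a.e.; inside by the choice of $\alpha$), placing $C$ itself in the intersection. The main obstacle is precisely the local-definiteness extraction -- reducing the mere non-inclusion $\outsupp(\sigma-1)\not\subseteq C$ to a ball $B$ that is both uniformly of one sign in $\sigma-1$ and reachable from $\partial\Omega$ through $\overline\Omega\setminus(C\cup\supp(\sigma-1)^\mp)$; for this \req{indef_separate} uses the positive distance between positive and negative supports, while the piecewise-analytic case exploits that a nontrivial real-analytic function has definite sign on some ball inside its analytic subdomain.
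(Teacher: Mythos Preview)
Your approach is correct and essentially mirrors the paper's: the two sufficient implications come directly from the two-sided monotonicity estimate of Lemma~\ref{le:monotonicity} (the paper packages this via Corollary~\ref{Cor:monotonicity_linearized} and \req{Frechet_monotonicity}, which is the same computation), and the contrapositive uses localized potentials for the homogeneous background together with local definiteness, exactly as the paper does by pointing back to the sets $D_1,D_2$ from Theorem~\ref{th:main_th_disjoint}.

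Your explicit choice $D_2=C\cup\supp(\sigma-1)^\mp$ is a clean variant of the paper's construction and works for the reason you indicate: once the local-definiteness step yields a relatively open $V\subseteq U$ connected to $\partial\Omega$ with $(\sigma-1)|_V$ of one sign, one has $(\sigma-1)^\mp|_V=0$, hence $V$ avoids both $C$ and $\supp(\sigma-1)^\mp$, giving $\inn B\not\subseteq\out D_2$. One caveat on your last sentence: in the piecewise-analytic case it is \emph{not} enough that ``a nontrivial real-analytic function has definite sign on some ball inside its analytic subdomain''; that alone does not deliver the reachability from $\partial\Omega$. What you actually need is the full topological content of Theorem~\ref{thm:pcw_anal_def}/Corollary~\ref{cor:pcw_anal_def} --- that the sign-definite region can be chosen connected to $\partial\Omega$ within $U$ --- which is precisely the nontrivial appendix result the paper invokes. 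Also, the ``Hence'' requires $\out(\outsupp(\sigma-1))=\outsupp(\sigma-1)$, which is true but is not literally Lemma~\ref{lemma:support_properties}(d); it follows by the same argument applied once more.
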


\begin{example}\label{ex:main_th_disjoint_linearized}
Let $\sigma=1+\chi_{D^+}-\frac{1}{2}\chi_{D^-}$ where $D^+,D^-\subseteq \Omega$ are open sets with
$\overline{D^+}\cap \overline{D^-}=\emptyset$, and $\overline{D^+}\cup \overline{D^-}\subset \Omega$ has a connected complement.

Then for every closed set $C\subset \Omega$ with connected complement $\Omega\setminus C$
\[
D^+ \cup D^-\subseteq C \quad \mbox{ if and only if } \quad \Lambda(1)+ \Lambda'(1)\chi_C\leq \Lambda(\sigma)\leq \Lambda(1)- \Lambda'(1)\chi_C.
\]
\end{example}

%%%%
\emph{Proof of Theorem~\ref{th:main_th_disjoint}.} 
Let $\alpha>1$ and $C=\out C\subseteq \overline\Omega$. Then $C$ is closed and thus measurable, 
so that $1-{\TS \frac{1}{\alpha}} \chi_C, 1+\alpha \chi_C\in L^\infty_+(\Omega)$.

Corollary~\ref{Cor:monotonicity} yields the first two assertions
\begin{alignat*}{2}
1 -  {\TS \frac{1}{\alpha}} \chi_{C}&\leq \sigma \quad & \mbox{ implies } & \quad \Lambda(1 - {\TS \frac{1}{\alpha}} \chi_{C})\geq \Lambda(\sigma),\\
1 + \alpha \chi_{C}&\geq \sigma \quad & \mbox{ implies } & \quad \Lambda(1+\alpha \chi_C)\leq \Lambda(\sigma)
\end{alignat*}

It remains to show that $\outsupp (\sigma-1)\nsubseteq C$
implies that either
\[ 
\Lambda(1 - {\TS \frac{1}{\alpha}} \chi_{C})\ngeq \Lambda(\sigma)\quad\text{ or }\quad \Lambda(1+\alpha \chi_C)\nleq \Lambda(\sigma).
\]
Let $\outsupp (\sigma-1)\nsubseteq C=\out C$. Then there exists a relatively open set
$U\subseteq \overline{\Omega}$ that is connected to $\partial \Omega$ where
$\sigma|_U\not\equiv 1$ and $C\cap U=\emptyset$.

We first prove the assertion for the case that $\sigma$ is piecewise analytic. Using the local definiteness property  derived in corollary~\ref{cor:pcw_anal_def}) in the appendix, we can choose (note that $\Omega \setminus D_2\subseteq U$ implies $C\subseteq D_2$)
\[
D_1,D_2\subseteq \overline\Omega,\quad \mbox{ with } \quad
D_1=\inn D_1\not\subseteq \out D_2=D_2, \quad C\subseteq D_2,
\]
so that either
\begin{enumerate}[(a)]
\item $\DS \sigma\geq 1 \mbox{ on } \Omega \setminus D_2, \quad \sigma-1\in L^\infty_+(D_1)$, or
\item $\DS \sigma\leq 1 \mbox{ on } \Omega \setminus D_2, \quad 1-\sigma\in L^\infty_+(D_1)$.
\end{enumerate}
Replacing $D_1$ with $D_1\setminus  \out D_2$, we can also assume that $D_1\cap D_2=\emptyset$.

We then use the localized potentials theorem \ref{thm:loc_pot} for the homogeneous conductivity $\tau=1$
and obtain a sequence $(g_m)_{m\in\mathbb{N}}\subset L^2_\diamond(\partial \Omega)$ so that the 
solutions $(u^\tau_m)_{m\in \mathbb{N}}\subseteq \Hd^1(\Omega)$ of
\[
\nabla \cdot \tau \nabla u^\tau_m=0 \quad \mbox{ in $\Omega$}, \qquad \tau \partial_\nu u^\tau_m|_{\partial \Omega}=g_m, 
\]
fulfill 
\[
 \lim_{m\rightarrow\infty}\int_{D_1}|\nabla u_m|^2\,\mathrm{d}x=\infty\quad\text{and}\quad\lim_{m\rightarrow\infty}\int_{D_2}|\nabla u_m|^2\,\mathrm{d}x=0.
\]
Since $C\subseteq D_2$ it follows from lemma~\ref{lemma:loc_pot_remark} that the solutions $u^\tau_m$ for the conductivities $\tau=1-{\TS \frac{1}{\alpha}} \chi_C$
and $\tau=1+\alpha \chi_C$ have the same property.

Hence, in case (a), we apply lemma~\ref{le:monotonicity} with $\tau=1+\alpha \chi_C$ and obtain (using that $\sigma\geq 1$ on $\Omega\setminus (D_1\cup D_2)$, and that $C\subseteq D_2$)
\begin{align*}
\lefteqn{\Langle g_m, \left(\Lambda(1 + \alpha \chi_C)-\Lambda(\sigma)\right)g_m \Rangle}\\
&\geq \int_\Omega \frac{1 + \alpha \chi_C}{\sigma} (\sigma- (1 + \alpha \chi_C)) |\nabla u^\tau_m|^2 \dx\\
& = \int_{\Omega\setminus (D_1\cup D_2)} \frac{\sigma-1}{\sigma} |\nabla u^\tau_m|^2 \dx + \int_{D_1} \frac{\sigma-1}{\sigma} |\nabla u^\tau_m|^2 \dx\\
& \qquad {} + \int_{D_2} \frac{1 + \alpha \chi_C}{\sigma} (\sigma- (1 + \alpha \chi_C)) |\nabla u^\tau_m|^2 \dx\\ 
&\geq \int_{D_1} \frac{\sigma-1 }{\sigma}  |\nabla u^\tau_m|^2 \dx
+ \int_{D_2} \frac{1 + \alpha \chi_C}{\sigma} (\sigma- (1 + \alpha \chi_C)) |\nabla u^\tau_m|^2 \dx\to \infty.
\intertext{In case (b), we apply lemma~\ref{le:monotonicity} with $\tau=1-{\TS \frac{1}{\alpha}} \chi_C$ and obtain (using that $\sigma\leq 1$ on $\Omega\setminus (D_1\cup D_2)$, and that $C\subseteq D_2$)}
\lefteqn{\Langle g_m, \left(\Lambda(1-{\TS \frac{1}{\alpha}} \chi_C)-\Lambda(\sigma)\right)g_m \Rangle}\\
&\leq \int_\Omega (\sigma- ( 1-{\TS \frac{1}{\alpha}}\chi_C )) |\nabla u^\tau_m|^2\dx\\
&= \int_{\Omega\setminus (D_1\cup D_2)} (\sigma-1) |\nabla u^\tau_m|^2\dx 
+ \int_{D_1} (\sigma- 1) |\nabla u^\tau_m|^2\dx\\
& {} \qquad + \int_{D_2} (\sigma- ( 1-{\TS \frac{1}{\alpha}}\chi_C )) |\nabla u^\tau_m|^2\dx\\
&\leq \int_{D_1} (\sigma-1) |\nabla u^\tau_m|^2\dx + \int_{D_2} (\sigma- ( 1-{\TS \frac{1}{\alpha}}\chi_C )) |\nabla u^\tau_m|^2\dx \to -\infty,
\end{align*}
which proves the assertion for piecewise analytic conductivities.

Now we prove that the assertion also holds for (not necessary piecewise analytic) conductivities fulfilling \req{indef_separate}.
It suffices to show that also in this case, there exist
\[
D_1,D_2\subseteq \overline\Omega,\quad \mbox{ with } \quad
D_1=\inn D_1\not\subseteq \out D_2=D_2, \quad C\subseteq D_2,
\]
such that either (a) or (b) from above holds.

First note that if $\supp (\sigma-1)^+$ and $\supp (\sigma-1)^-$ are disjoint compact sets, then
\[
\delta:=\mathrm{dist}\left( \supp (\sigma-1)^+, \supp (\sigma-1)^- \right)>0.
\]

$\sigma|_U\not\equiv 1$ implies that there exists a point $y\in U\cap \supp(\sigma-1)$.
Let $x\in \partial \Omega\cap U$. Since $\partial \Omega$ is a smooth boundary and
$U\cap \Omega$ is open and connected, we can connect $x$ and $y$ with a continuous path
\[
\gamma:\ [0,1]\to U, \quad \gamma(0)=x,\ \gamma(1)=y.
\]
Using that $U$ is relatively open, there exists, for each $t\in [0,1]$, a ball $B_t:=B_{\epsilon(t)}(\gamma(t))$
with radius $\epsilon(t)<\delta/2$ and $B_t\cap \overline\Omega \subseteq U$.

By compactness of $\gamma([0,1])$ we can choose a finite number
$0\leq t_1<\ldots<t_N\leq 1$, so that
\[
\gamma([0,1])\subset \left( B_{t_1}\cup\ldots \cup B_{t_N} \right) \cap \overline{\Omega}.
\]
Since $\gamma(1)=y\in \supp(\sigma-1)$, there exists a smallest index $J$
for which 
\[
B_{t_J}\cap \overline{\innsupp (\sigma-1)}=B_{t_J}\cap \supp(\sigma-1)\neq \emptyset
\] 
so that there exists an open set $D_1\subseteq B_{t_J}$ with $|\sigma-1|\in L^\infty_+(D_1)$.

We define $D_2:=\Omega \setminus \left( B_{t_1}\cup\ldots \cup B_{t_J} \right)$. Then
\[
D_1,D_2\subseteq \overline\Omega,\quad \mbox{ with } \quad
D_1=\inn D_1\not\subseteq \out D_2=D_2, \quad C\subseteq D_2.
\]
Furthermore, since $B_{t_J}$ has diameter less than $\delta$,
it can not intersect both $\supp (\sigma-1)^+$ and
$\supp (\sigma-1)^-$, so that either 
\begin{enumerate}[(a)]
\item $\DS \sigma\geq 1 \mbox{ on } \Omega \setminus D_2, \quad \sigma-1\in L^\infty_+(D_1)$, or
\item $\DS \sigma\leq 1 \mbox{ on } \Omega \setminus D_2, \quad 1-\sigma\in L^\infty_+(D_1)$,
\end{enumerate}
which finishes the proof.
\hfill $\Box$

%%%%%%%%%%%%%%%%%%%

\emph{Proof of Theorem~\ref{th:main_th_disjoint_linearized}.} 

If $1-\alpha\chi_C\leq 2- \frac{1}{\sigma}$, then
$\alpha\chi_C\geq \frac{1}{\sigma}(1-\sigma)$, so that \req{Frechet_monotonicity} %implies $\alpha\Lambda'(1)\chi_C\leq \Lambda'(1)\left(\frac{1}{\sigma}(1-\sigma)\right)$ and we obtain from 
and corollary \ref{Cor:monotonicity_linearized} imply that
\begin{align*}
\Lambda(\sigma)\leq \Lambda(1)-\Lambda'(1)\left(\frac{1}{\sigma}(1-\sigma)\right)
\leq \Lambda(1)-\alpha\Lambda'(1)\chi_C.
\end{align*}
Likewise, if $1+\alpha\chi_C\geq \sigma$ then \req{Frechet_monotonicity} %implies $\alpha \Lambda'(1)\chi_C\leq \Lambda'(1)(\sigma-1)$ and we obtain from 
and corollary \ref{Cor:monotonicity_linearized} imply that
\begin{align*}
\Lambda(\sigma)\geq \Lambda(1)-\Lambda'(1)(1-\sigma)\geq \Lambda(1)+\alpha\Lambda'(1)\chi_C.
\end{align*}
This shows the first two assertions.

% To show the third assertion let $\outsupp (\sigma-1)\nsubseteq C$. As in the 
% proof of theorem~\ref{th:main_th_disjoint} we can then obtain sets 
% \[
% D_1,D_2\subseteq \overline \Omega \quad \mbox{ with } \quad D_1=\inn D_1 \not\subseteq \out D_2=D_2\supseteq C,
% \]
% where either
% \begin{enumerate}[(a)]
% \item $\DS \sigma\geq 1 \mbox{ on } \Omega \setminus D_2, \quad \sigma-1\in L^\infty_+(D_1)$, or
% \item $\DS \sigma\leq 1 \mbox{ on } \Omega \setminus D_2, \quad 1-\sigma\in L^\infty_+(D_1)$.
% \end{enumerate}

Moreover, lemma \ref{le:monotonicity} yields that for all $\alpha\in \R$ 
\begin{align*}
\Langle (\Lambda(1)+\alpha\Lambda'(1)\chi_C-\Lambda(\sigma) ) g,g\Rangle 
&\geq \int_\Omega \left( \frac{1}{\sigma}(\sigma-1) -\alpha \chi_C \right) |\nabla u_g|^2\dx
\end{align*}
and
\begin{align*}
\Langle (\Lambda(1)-\alpha\Lambda'(1)\chi_C-\Lambda(\sigma) ) g,g\Rangle 
&\leq \int_\Omega(\sigma-1+\alpha\chi_C) |\nabla u_g|^2 \dx,
\end{align*}
where $u_g\in \Hd^1(\Omega)$ solves
$\Delta u_g=0$ and $\partial_\nu u_g|_{\partial \Omega}=g$. Hence, the third assertion follows
by using localized potentials for the homogeneous conductivity and the same sets $D_1$, $D_2$ as in theorem~\ref{th:main_th_disjoint}.
\hfill $\Box$

\subsection{Remarks and extensions}\label{subsect:extensions}
Let us comment on some extensions and generalizations of our results. 
Our assumption that the background conductivity is equal to $1$ and that we are given measurements on the complete boundary $\partial \Omega$
have been merely for the ease of presentation. All our results and proofs remain valid if 
$\partial \Omega$ is replaced by an arbitrarily small
open piece $S\subset \partial \Omega$ and we are given the partial Neumann-to-Dirichlet operator 
\[
\Lambda(\sigma):\ L^2_\diamond(S)\to L^2_\diamond(S), \quad g\mapsto u^g_\sigma|_{S},
\]
where $u^g_\sigma\in H^1_\diamond(\Omega)$ is the unique solution of
\begin{equation*}
\nabla\cdot\sigma\nabla u^g_\sigma = 0 \text{ in }\Omega,\quad\sigma\partial_\nu u_\sigma^g\vert_{\partial \Omega} = \left\{\begin{array}{l l} g & \text{ on } S,\\
0  & \text{ on } \partial \Omega\setminus S.\end{array}\right.
\end{equation*}
Also, all the results still hold when the background conductivity $1$ is replaced by a known piecewise analytic function.

Let us also note that our results require piecewise analyticity for only two purposes: the existence of localized potentials
and the local definiteness property. Localized potentials exist for less regular conductivities, it only requires that
the solutions of the corresponding elliptic EIT equations satisfy a unique continuation property, cf.\ \cite{Geb08}.
Local definiteness can hold for quite general functions, if additional assumption are made (e.g., 
that positive and negative part are separated as in \req{indef_separate}).
However, the authors are not aware of any natural function classes beyond piecewise-analytic functions in which a property in the spirit of 
theorem \ref{thm:pcw_anal_def} holds without further assumptions.

\begin{appendix}
%\section{Appendix}
%\subsection{Local definiteness of piecewise analytic functions}
\section{Local definiteness of piecewise analytic functions}

In this appendix, we show that piecewise analytic functions have a \emph{local definiteness property}. If they do not vanish identically
then there is either a neighborhood of the boundary where they differ from zero in the positive direction, or a neighborhood where they
differ in the negative direction.

The property follows from the arguments used in the proofs of \cite[Theorem~4.2]{Har09} and \cite[Lemma~3.7]{HS10}. 
However, some subtle and not entirely trivial topological details were omitted in \cite{Har09,HS10}, which is why we
give the proof here in full detail.

\begin{theorem}\label{thm:pcw_anal_def}
Let $\Omega\subset \R^n$, $n\geq 2$ be a smoothly bounded domain, and let $\sigma\in L_+^\infty(\Omega)$ be piecewise analytic.
Let $U\subseteq \overline\Omega$ be relatively open and connected to $\partial \Omega$, and let $\sigma|_U\not\equiv 0$.

Then we can find a subset $V\subseteq U$ with the same properties,
on which $\sigma$ does not change sign, i.e.
\begin{enumerate}[(a)]
\item $V\subseteq \overline\Omega$ is relatively open, $V$ is connected to $\partial \Omega$, $V\subseteq U$,
\item $\sigma|_V\not\equiv 0$, and either $\sigma|_V\geq 0$ or $\sigma|_V\leq 0$.
\end{enumerate}
\end{theorem}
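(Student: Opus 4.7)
The approach exploits the piecewise-analytic decomposition $\overline\Omega = \overline{O_1 \cup \cdots \cup O_M}$ from Definition \ref{def:piecewise}(c). I classify each cell $O_m$ as a \emph{zero-cell} if $\sigma|_{O_m}\equiv 0$, and otherwise as a \emph{sign-definite cell}: because $O_m$ is a subdomain (hence connected) and $\sigma$ extends to a real-analytic function on a neighborhood of $\overline{O_m}$, the zero set $Z_m:=\{x\in O_m:\sigma(x)=0\}$ is then a proper real-analytic subvariety of codimension $\geq 1$, and $O_m\setminus Z_m$ splits into at most countably many open components on each of which $\sigma$ has a strict constant sign. Denote by $P_m,\,N_m\subseteq O_m$ the unions of the positive, respectively negative, sign-definite components.

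Since $\sigma|_U\not\equiv 0$, there exists a cell $O_{m^*}$ with $O_{m^*}\cap U\neq\emptyset$ and $\sigma|_{O_{m^*}}\not\equiv 0$, so $O_{m^*}\cap U$ meets $P_{m^*}\cup N_{m^*}$; after relabeling, I may assume $O_{m^*}\cap U\cap P_{m^*}\neq\emptyset$ and pick a point $y$ therein. I also pick a boundary point $x_0\in U\cap\partial\Omega$. Since $U\cap\Omega$ is open and connected, I choose a continuous path $\gamma\colon[0,1]\to U\cap\overline\Omega$ with $\gamma(0)=x_0$ and $\gamma(1)=y$; by a small transversality perturbation (which is possible because the cell boundaries are piecewise smooth and there are only finitely many cells), I may assume $\gamma$ is piecewise smooth and crosses the cell interfaces transversally.

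The \emph{main obstacle} is that between $x_0$ and $y$ the path may traverse cells in $N_m$, whose sign is opposite to the one I want for $V$. I remove this obstruction by truncating $\gamma$. Let $t_*\in(0,1]$ be the smallest time at which $\gamma(t_*)\in\overline{P_m}$ for some cell $O_m$; such a time exists because $\gamma(1)=y\in P_{m^*}$. Write $m_\star$ for this cell. I claim that the truncated path $\gamma|_{[0,t_*]}$ can be chosen to avoid every $N_m$: whenever $\gamma$ would enter some $N_m$ before $t_*$, it must also exit it (since $\gamma(t_*)\notin\overline{N_m}$), and by perturbing $\gamma$ slightly inside the adjacent zero-cells -- which form an open union -- one can reroute around $N_m$; this is possible because $U\cap\Omega$ is connected and open, and the forbidden sets $\overline{N_m}$ are properly contained in the cells they live in, so the complement in $U\cap\Omega$ remains path-connected. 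After finitely many such reroutings (bounded by the number $M$ of cells), I obtain a path that lies in the closure of zero-cells on $[0,t_*)$ and first hits $\overline{P_{m_\star}}$ at $t_*$.

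Finally I define $V$ as the union of an open tubular neighborhood of $\gamma|_{[0,t_*]}$ contained in the union of the visited zero-cells' interiors together with the boundary piece at $x_0$, and an open neighborhood of $\gamma(t_*)$ inside $P_{m_\star}$, all intersected with $U$. By construction $V\subseteq U$ is relatively open in $\overline\Omega$, $V\cap\partial\Omega$ contains a neighborhood of $x_0$ (so $V$ is connected to $\partial\Omega$), and $V\cap\Omega$ is connected because it is a tubular neighborhood of the connected set $\gamma([0,t_*])$. On $V$, $\sigma$ vanishes in the zero-cell portion and is strictly positive in the $P_{m_\star}$-portion, hence $\sigma|_V\geq 0$ and $\sigma|_V\not\equiv 0$, giving properties (a) and (b).
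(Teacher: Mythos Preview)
Your overall strategy --- follow a path from $\partial\Omega$ through zero-cells until you first meet a sign-definite region, then take a tubular neighborhood --- is sound in spirit and close to the paper's maximality argument. However, the rerouting step contains a genuine gap.

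You fix the positive sign in advance by choosing $y\in P_{m^*}$ and defining $t_*$ as the first time $\gamma$ meets some $\overline{P_m}$. Before $t_*$ the path may traverse cells that are \emph{entirely} negative, i.e.\ cells $O_m$ with $\sigma|_{O_m}<0$ everywhere, so that $N_m=O_m$ and $\overline{N_m}=\overline{O_m}$. Your assertion that ``the forbidden sets $\overline{N_m}$ are properly contained in the cells they live in'' is then simply false, and your conclusion that ``the complement in $U\cap\Omega$ remains path-connected'' does not follow. A concrete obstruction: if a single negative cell forms an annular ``belt'' separating $\partial\Omega$ from all positive regions inside $U$, every path from $x_0$ to $y$ must cross it, and no rerouting through zero-cells exists. (The theorem is still true in that situation --- one just takes $V$ with the \emph{negative} sign --- but your argument, having committed to the positive sign, cannot produce it.)

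The fix is not to reroute but to let the first sign encountered dictate the outcome: redefine $t_*$ as the first time $\gamma$ enters the closure of \emph{any} non-zero cell $O_{m_\star}$. Then $\gamma|_{[0,t_*)}$ lies in zero-cells, no rerouting is needed, and near $\gamma(t_*)$ you must exhibit a one-sided neighborhood in $O_{m_\star}$ on which $\sigma$ has a definite sign. This last step is exactly Lemma~\ref{lemma:pcw_anal_definiteness} (the Kohn--Vogelius normal-derivative argument), which you have not invoked; note that $P_{m_\star}$ and $N_{m_\star}$ can interlace arbitrarily near a boundary point, so your decomposition of $O_{m_\star}$ into $P_{m_\star}\cup N_{m_\star}\cup Z_{m_\star}$ alone does not give this. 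The paper carries out essentially the same program, replacing your path by a maximal relatively open zero-set $V\subseteq U$ and using a Baire-category argument (Lemma~\ref{lemma:pcw_anal_first_bndry}) to ensure the frontier of $V$ meets a \emph{smooth} boundary piece of some non-zero cell, where Lemma~\ref{lemma:pcw_anal_definiteness} can be applied.
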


Obviously, if a piecewise analytic function is not identically zero, we can find a neighborhood where it is bounded away from zero.
Hence, choosing $D_2:=\Omega\setminus V$, we obtain the following corollary.
%which is useful in view of the localized potentials of theorem \ref{thm:loc_pot}.

\begin{corollary}\label{cor:pcw_anal_def}
%Let $\Omega\subset \R^n$, $n\geq 2$ be a smoothly bounded domain, and $\sigma\in L_+^\infty(\Omega)$ be piecewise analytic.
%For every $U\subseteq \overline\Omega$ that is relatively open and connected to $\partial \Omega$.
%
Under the assumptions of theorem \ref{thm:pcw_anal_def}
we can choose 
\[
D_1,D_2\subseteq \overline\Omega,\quad \mbox{ with } \quad
D_1=\inn D_1\not\subseteq \out D_2=D_2, \quad \Omega \setminus D_2\subseteq U,
\]
and either
\begin{alignat*}{2}
\sigma|_{\Omega\setminus D_2}\geq 0, & \quad & \sigma|_{D_1}&\in L_+^\infty(D_1),
\quad \mbox{ or }\\
\sigma|_{\Omega\setminus D_2}\leq 0, & \quad & -\sigma|_{D_1}&\in L_+^\infty(D_1).
\end{alignat*}
\end{corollary}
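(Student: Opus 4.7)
The plan is to use Theorem~\ref{thm:pcw_anal_def} as a black box to produce a relatively open set $V\subseteq U$, connected to $\partial\Omega$, on which $\sigma$ is sign-definite and not identically zero; after possibly replacing $\sigma$ by $-\sigma$, I may assume $\sigma|_V\geq 0$, matching the first alternative in the statement.

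To build $D_1$, I exploit piecewise analyticity: since $\sigma|_V\not\equiv 0$, choose a piece $O_m$ from the decomposition in Definition~\ref{def:piecewise}(c) with $V\cap O_m$ nonempty and $\sigma|_{V\cap O_m}\not\equiv 0$. On $O_m$, $\sigma$ extends to a real-analytic (in particular continuous and pointwise defined) function, so there exists $x_0\in V\cap O_m$ with $\sigma(x_0)>0$, and by continuity a sufficiently small open ball $D_1:=B_\varepsilon(x_0)\subset V\cap O_m$ satisfies $\sigma|_{D_1}\geq\sigma(x_0)/2>0$. Then $D_1=\inn D_1$ (as an open ball) and $\sigma|_{D_1}\in L^\infty_+(D_1)$, as required.

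To build $D_2$, I set $D_2:=\out(\overline\Omega\setminus V)$; idempotency of the outer-support construction then forces $\out D_2=D_2$. Unwinding the definition, $\overline\Omega\setminus D_2$ is the union of all relatively open subsets $W\subseteq\overline\Omega$ connected to $\partial\Omega$ with $\chi_{\overline\Omega\setminus V}|_W\equiv 0$ a.e., i.e., with $W$ essentially contained in $V\subseteq U$. This yields $\Omega\setminus D_2\subseteq U$ and, since $\sigma|_V\geq 0$, also $\sigma|_{\Omega\setminus D_2}\geq 0$. Finally, $V$ itself is a legitimate such $W$, so $V\subseteq\overline\Omega\setminus D_2$; in particular $D_1\subseteq V$ is disjoint from $D_2$, giving $D_1\not\subseteq D_2$.

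The main obstacle is really not in the corollary, which is essentially bookkeeping with the support definitions and with piecewise analyticity, but in Theorem~\ref{thm:pcw_anal_def} itself. There one must rule out the possibility that the zero set of $\sigma$---a finite union of proper real-analytic subvarieties of the pieces $O_m$, together with entire pieces on which $\sigma$ vanishes identically---separates every candidate sign-definite nucleus $B_\varepsilon(x_0)$ inside $U$ from every boundary point in $U\cap\partial\Omega$. The proper-codimension part of the zero set should be too thin to obstruct a thickened path from $x_0$ to $\partial\Omega$ inside $U$, and the identically-zero pieces can be absorbed into $V$ without spoiling sign-definiteness; the careful topological/analytic execution of this idea is what the appendix proof has to deliver.
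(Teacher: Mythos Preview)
Your proof is correct and follows essentially the same approach as the paper: obtain $V$ from Theorem~\ref{thm:pcw_anal_def}, take $D_2$ to be (essentially) the complement of $V$, and take $D_1$ to be a small ball on which piecewise analyticity forces $\sigma$ to be bounded away from zero. The only difference is cosmetic---the paper simply sets $D_2:=\Omega\setminus V$, whereas you set $D_2:=\out(\overline\Omega\setminus V)$ and invoke idempotency of $\out$ to secure $\out D_2=D_2$; your version is slightly more careful on this point, but the idea is identical.
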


In the following, let $\Omega\subset \R^n$, $n\geq 2$ be a smoothly bounded domain, and $\sigma\in L_+^\infty(\Omega)$ be piecewise analytic
with respect to
\[
\overline{\Omega}= \overline{O_1\cup \ldots \cup O_M},\quad \partial O_m=\bigcup_{k\in \N} \overline{\Gamma_m^k}
\]
where, w.l.o.g, we assume that every $\partial O_m$ consists of infinitely many pieces.
Furthemore, let $U\subseteq \overline\Omega$ be relatively open and connected to $\partial \Omega$.

\begin{lemma}\label{lemma:pcw_anal_first_bndry}
There exists an open ball $B\subseteq \R^n$ such that 
\[
B\cap \overline \Omega\subseteq U, \quad \mbox{ and } \quad B\cap \overline \Omega\mbox{ is connected to $\partial \Omega$},
\]
and for one of the $O_m$ and one of its smooth boundary pieces $\Gamma_m^k\subseteq \partial O_m$,
\[
B\cap \Omega=B\cap O_m \quad \mbox{ and } \quad B\cap \partial \Omega\subseteq \Gamma_m^k.
\]
\end{lemma}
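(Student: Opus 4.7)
The plan is to locate a generic point $x_0 \in U\cap \partial\Omega$ that sits in the relative interior of a single smooth boundary piece and is not on the boundary of any other subdomain; a sufficiently small ball around such an $x_0$ will then serve as $B$.

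First, I would introduce the singular set
\[
\mathcal{S} := \bigcup_{m,k} \bigl( \overline{\Gamma_m^k} \setminus \Gamma_m^k \bigr) \ \cup \ \bigcup_{m\neq m'} (\partial O_m \cap \partial O_{m'}).
\]
On the smooth $(n-1)$-manifold $\partial\Omega$, the first family consists of (closures of) smooth $(n-2)$-surfaces, and the second consists of intersections of distinct smooth $(n-1)$-pieces --- these cannot share a whole $(n-1)$-face lying on $\partial\Omega$, since $O_m, O_{m'}$ are disjoint open subsets of $\Omega$ which would then both have to lie on the same side of that face, forcing the intersection to drop to dimension $n-2$. Thus $\mathcal{S}\cap\partial\Omega$ is a countable union of closed sets of $(n-1)$-Hausdorff measure zero in $\partial\Omega$. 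Since $U$ is connected to $\partial\Omega$, $U\cap\partial\Omega$ is a nonempty relatively open subset of $\partial\Omega$, hence of positive $(n-1)$-Hausdorff measure, and so contains some $x_0\notin \mathcal{S}$. From $x_0 \in \overline{O_1\cup\ldots\cup O_M}\cap \partial\Omega$ and the avoidance of $\partial O_m\cap\partial O_{m'}$, there is a unique $m$ with $x_0\in\partial O_m$ (using also that $O_m\subseteq\Omega$ is open while $x_0\in\partial\Omega$), and avoidance of $\overline{\Gamma_m^k}\setminus\Gamma_m^k$ gives a unique $k$ with $x_0\in\Gamma_m^k$.

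Next, near $x_0$ both $\partial\Omega$ and $\Gamma_m^k$ are smooth $(n-1)$-graphs through $x_0$, each with a distinguished ``interior side''; since $O_m\subseteq\Omega$, the $O_m$-side of $\Gamma_m^k$ must agree locally with the $\Omega$-side of $\partial\Omega$ (otherwise $O_m$ would exit $\Omega$), and by the one-sidedness clause in Definition \ref{def:piecewise}(a) this forces the two surfaces to coincide locally. Hence for some $\epsilon>0$ we have $B_\epsilon(x_0)\cap\partial\Omega = B_\epsilon(x_0)\cap\Gamma_m^k$ and $B_\epsilon(x_0)\cap\Omega = B_\epsilon(x_0)\cap O_m$, which is an open connected half-ball. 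Shrinking $\epsilon$ further so that $B_\epsilon(x_0)\cap\overline\Omega\subseteq U$ (possible because $U$ is relatively open and $x_0\in U$), the ball $B := B_\epsilon(x_0)$ has all four required properties: the two equalities are built in, $B\cap\overline\Omega\subseteq U$, and $B\cap\overline\Omega$ is a closed half-ball meeting $\partial\Omega$ at $x_0$, hence connected and connected to $\partial\Omega$.

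The main obstacle is the first step: rigorously establishing that $\mathcal{S}$ cannot cover the relatively open set $U\cap\partial\Omega$. This hinges on the countable (rather than arbitrary) collection of pieces $\Gamma_m^k$, the finiteness of the number of subdomains $O_m$, and the disjointness $O_m\cap O_{m'}=\emptyset$ inside $\Omega$, which together keep $\mathcal{S}\cap\partial\Omega$ genuinely of dimension at most $n-2$. Once a non-singular $x_0$ is in hand, the local coincidence of $\partial\Omega$ and $\Gamma_m^k$ near $x_0$ is a subtle but routine consequence of the one-sided defining property, and the remaining verifications are immediate.
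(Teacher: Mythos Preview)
Your overall strategy coincides with the paper's: find a point of $U\cap\partial\Omega$ lying in a single smooth boundary piece $\Gamma_m^k$ of a single $O_m$, and take a small ball there. The difference is in how that point is produced. You argue that the singular set $\mathcal S$ has zero $\mathcal H^{n-1}$-measure in $\partial\Omega$; the paper instead applies Baire's theorem to the countable closed cover $\overline B\cap\partial\Omega=\bigcup_{m,k}\bigl(\overline{\Gamma_m^k}\cap\overline B\cap\partial\Omega\bigr)$ to obtain one $\overline{\Gamma_m^k}$ with nonempty relative interior in $\partial\Omega$, and then uses a topological-dimension argument (citing Hurewicz--Wallman) to pass from $\overline{\Gamma_m^k}$ to $\Gamma_m^k$.

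You correctly flag the first step as the main obstacle, but your proposed resolution has a genuine gap. Definition~\ref{def:piecewise} imposes no regularity on the edges of the pieces: $\Gamma_m^k$ is only required to be relatively open in $\partial O_m$ and locally a smooth graph, so $\overline{\Gamma_m^k}\setminus\Gamma_m^k$ need not be a smooth $(n-2)$-surface and can carry positive $\mathcal H^{n-1}$-measure. For instance, with $C\subset\R$ a fat Cantor set, $\Gamma=(\R\setminus C)\times\{0\}$ is a smooth boundary piece of $O=(\R\setminus C)\times(0,\infty)$ in the sense of Definition~\ref{def:piecewise}(a), yet $\overline\Gamma\setminus\Gamma=C\times\{0\}$ has positive length. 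Baire category sidesteps this because it needs only that the $\overline{\Gamma_m^k}$ are closed and countably many; the subsequent passage to $\Gamma_m^k$ then rests on topological dimension rather than measure. A smaller point: your second family $\partial O_m\cap\partial O_{m'}$ is unnecessary---once $B\cap\partial\Omega\subseteq\Gamma_m^k$, the one-sidedness clause already forces $B\cap\Omega=B\cap O_m$, which is how the paper concludes without isolating a unique $m$ in advance.
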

\begin{proof}
Since $U$ is relatively open and $U\cap \partial \Omega\neq \emptyset$ , there exists an open ball $B$ with
$\emptyset \neq B\cap \partial \Omega$, $B\cap \overline \Omega\subseteq U$, and by shrinking $B$ we can assume that
\[
\emptyset\neq S:=\overline B \cap \partial \Omega \subseteq U.
\]

$\overline \Omega=\overline{O_1\cup \ldots \cup O_M}$ implies that
\[
\partial \Omega\subseteq \bigcup_{m=1}^M \partial O_m=\bigcup_{m=1}^M\bigcup_{k\in \N} \overline{\Gamma_m^k}
\quad \mbox{ and thus } \quad
S=\bigcup_{m,k} \overline{\Gamma_m^k}\cap S.
\]
%(Proof: $x\in \partial \Omega$. Then $x\not\in \Omega$, so $x\not\in O_j$ but $x\in \overline %O_j$)
By Baire's theorem, one of the countably many closed sets $\overline{\Gamma_m^k}\cap S$ must have 
non-empty interior in $S$. Hence, for one of the $\overline{\Gamma_m^k}$, there exists an open ball 
$B$ with
\[
\emptyset\neq B\cap \partial \Omega \subseteq \overline{\Gamma_m^k}\cap U.
\]
%Since $\Omega$ is smoothly bounded, we can achieve by shrinking $B$ that also $B\cap \partial \Omega$ is a non-empty smooth boundary piece. 

Moreover, $B\cap \partial \Omega$ must intersect $\Gamma_m^k$ because of the following dimension theoretical argument,
cf., e.g., the classical book of Hurewicz and Wallman \cite[Ch. IV, \S 4]{hurewicz1948dimension}.
$\Omega\cap B$ is an open (neither empty nor dense) subset of the 
$(n-1)$-dimensional ball $B$. As a subset of $B$, the boundary of $\Omega\cap B$ is
$\partial \Omega\cap B$, which shows that $\partial \Omega\cap B$ is $(n-1)$-dimensional
(and not of lesser dimension).
$\Gamma_m^k$ is a (neither empty nor dense) open subset of a set that is homeomorphic to $\R^{n-1}$. Hence, $\Gamma_m^k$ is 
$(n-1)$-dimensional and $\overline{\Gamma_m^k}\setminus \Gamma_m^k$ is $(n-2)$-dimensional. 
This shows that $B\cap \partial \Omega\not\subseteq \overline{\Gamma_m^k}\setminus \Gamma_m^k$, so that by shrinking $B$ we can assume that 
\[
\emptyset\neq B\cap \partial \Omega \subseteq \Gamma_m^k\cap U.
\]
Finally, we can shrink $B$ so that $B\cap \Omega=B\cap O_m$ and that $B\cap \Omega$ is connected.
\end{proof}

\begin{lemma}\label{lemma:pcw_anal_definiteness}
Every open ball $B\subseteq \R^n$ that intersects a smooth boundary piece $\Gamma_m^k$ contains
an open subball $B'\subseteq B$ intersecting $\Gamma_m^k$,
% $B'\cap \Gamma_m^k\neq \emptyset$ 
where either
\[
\sigma|_{B'\cap O_m}\geq 0 \quad \mbox{ or } \quad \sigma|_{B'\cap O_m}\leq 0.
\]
%open subdomains $D,B'\subseteq B$ with $\overline D\subseteq B'\cap O_m$, so
%that either
%\[
%\sigma|_{B'\cap O_m}\geq 1,\ \sigma|_{D}-1 \in L^\infty_+(D)
%\quad \mbox{ or } \quad
%\sigma|_{B'\cap O_m}\leq 1,\ 1-\sigma|_{D} \in L^\infty_+(D)
%\]
%note that $B'\cap O_m$ is connected if B' is small enough
\end{lemma}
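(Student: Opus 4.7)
The plan is to exploit the real-analytic structure on $O_m$: the piecewise-analytic hypothesis gives a real-analytic extension $\tilde\sigma_m$ of $\sigma|_{O_m}$ to an open connected neighborhood $V$ of $\overline{O_m}$. If $\tilde\sigma_m$ vanishes identically on $V$, the conclusion holds trivially with $B' = B$ since then $\sigma|_{B \cap O_m} = 0$ satisfies both sign inequalities. So assume $\tilde\sigma_m \not\equiv 0$; by analyticity on the connected set $V$, the zero set $Z := \tilde\sigma_m^{-1}(0)$ is a proper real-analytic subvariety with empty interior.

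I split the argument into two cases. Case A: some $z' \in \Gamma_m^k \cap B$ satisfies $\tilde\sigma_m(z') \neq 0$. Then by continuity a sufficiently small ball $B' \subseteq B \cap V$ centered at $z'$ has $\tilde\sigma_m > 0$ (or $<0$) throughout $B'$; since $z' \in \Gamma_m^k$, such a $B'$ works.

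Case B: $\tilde\sigma_m$ vanishes on all of $\Gamma_m^k \cap B$. This is the crux. Near a fixed $z \in \Gamma_m^k \cap B$, using that the germ of $\tilde\sigma_m$ at $z$ is a nonzero element of the UFD of convergent real-analytic germs, I factor $\tilde\sigma_m = u \cdot \prod_{i=1}^r p_i^{a_i}$ in a small neighborhood $U$ of $z$, where $u$ is a nowhere-vanishing unit and the $p_i$ are pairwise distinct irreducible analytic functions. Then $Z \cap U = \bigcup_i \{p_i = 0\}$, so $\Gamma_m^k \cap U$ is covered by the finitely many closed sets $\{p_i = 0\} \cap \Gamma_m^k$. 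Baire category on the locally compact space $\Gamma_m^k \cap U$ yields some index $i_0$ and a relatively open subset $W \subseteq \Gamma_m^k \cap U$ with $W \subseteq \{p_{i_0} = 0\}$. Since $W$ is $(n-1)$-dimensional while the singular locus of the irreducible analytic hypersurface $\{p_{i_0} = 0\}$ has dimension at most $n-2$, there is a smooth point $z'' \in W$ of $\{p_{i_0} = 0\}$. Near $z''$, both $\Gamma_m^k$ and $\{p_{i_0} = 0\}$ are $(n-1)$-dimensional smooth submanifolds sharing the open set $W$, so they coincide in a small ball $B'' \subseteq B$ around $z''$ on which $u$ and every $p_i$ with $i \neq i_0$ can additionally be assumed to be nowhere zero.

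Then on $B'' \cap O_m$, which lies on one fixed side of $\Gamma_m^k = \{p_{i_0} = 0\}$, the factor $p_{i_0}$ has constant nonzero sign, the unit $u$ has constant sign, and each remaining $p_i$ has constant nonzero sign. Consequently $\tilde\sigma_m = u \prod_i p_i^{a_i}$ has constant sign on $B'' \cap O_m$, and $B' := B''$ proves the lemma. The main obstacle is Case B: it requires the local factorization of real-analytic germs, the Baire argument to trap $\Gamma_m^k$ locally inside a single irreducible component of the analytic zero set, and the geometric observation that an $(n-1)$-dimensional $C^\infty$ submanifold contained in a smooth analytic hypersurface of the same dimension must coincide with it locally.
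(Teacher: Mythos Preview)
Your argument is essentially correct but takes a substantially heavier route than the paper. The paper invokes the classical Kohn--Vogelius normal-derivative trick: if $\sigma|_{B\cap O_m}\not\equiv 0$, one picks the smallest integer $\ell$ for which the $\ell$-th normal derivative $\partial_\nu^\ell\sigma$ is not identically zero on $\Gamma_m^k\cap B$; at a boundary point where this derivative is nonzero, the one-variable Taylor expansion of $t\mapsto \sigma(z+t\nu(z))$ shows $\sigma$ has a fixed sign on the $O_m$-side in a small subball. This uses nothing beyond one-variable analyticity along normal lines and avoids any structure theory of analytic varieties.

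Your approach via UFD factorization of germs and Baire category is valid, but two steps are underjustified as written. First, the claim that the singular locus of the real zero set $\{p_{i_0}=0\}$ has dimension at most $n-2$ does \emph{not} follow from real irreducibility alone without further argument; one way to see it is to complexify (an $\mathbb R$-irreducible germ is square-free over $\mathbb C$), use that the complex singular locus has complex dimension $\le n-2$, and then observe that the intersection of a complex-analytic set of complex dimension $d$ with the totally real $\mathbb R^n$ has real dimension $\le d$. Second, the assertion that the remaining factors $p_i$, $i\neq i_0$, can be made nowhere zero on $B''$ needs the fact that two non-associate $\mathbb R$-irreducible germs cannot both vanish on a common $(n{-}1)$-dimensional piece; this again follows from complexification (they have no common complex factor, so their common zero set has complex dimension $\le n-2$, hence real dimension $\le n-2$ after intersecting with $\mathbb R^n$). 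Once these are supplied, your conclusion that $\tilde\sigma_m$ is a unit times a power of a local defining function of $\Gamma_m^k$ near $z''$, hence of fixed sign on $B''\cap O_m$, is correct. So your proof works, but the paper's argument is considerably shorter and more elementary.
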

\begin{proof}
We use an argument of Kohn and Vogelius \cite{Koh84}.
If $\sigma|_{B\cap O_m}\equiv 0$ then the assumption is trivial.
Otherwise, by analyticity, there must be a smallest $k\in \N$, so that the normal derivative
$\partial^k_{\nu(z)} \sigma(z)$ is not identically zero for all $z\in \Gamma_m^k\cap B$.
Hence there is a neighbourhood of a point $z\in \Gamma_m^k\cap B$ 
on which either $\sigma\geq 1$ or $\sigma\leq 1$.
\end{proof}

Now we are ready to prove the local definiteness property.

\emph{Proof of theorem \ref{thm:pcw_anal_def}.} From lemma~\ref{lemma:pcw_anal_first_bndry}
we obtain an open ball $B\subseteq \R^n$ with
\[
B\cap \overline \Omega\subseteq U, \quad B\cap \overline \Omega\mbox{ is connected to $\partial \Omega$},
\]
and (w.l.o.g.)
\[
B\cap \Omega=B\cap O_1, \quad B\cap \partial \Omega\subseteq \Gamma_1^1.
\]
If $\sigma$ is not identically zero on $O_1$ then the assertion follows from lemma~\ref{lemma:pcw_anal_definiteness}.

Otherwise, $M>1$, and the set $V:=B\cap \overline{\Omega}$ has the following properties: 
\begin{enumerate}[(i)]
\item $V$ is a relatively open subset of $\overline{\Omega}$ that is connected
to $\partial \Omega$, \label{thm_pcw_anal_prop1}
\item $V$ fulfills $B\cap \overline{\Omega}\subseteq V\subseteq U$, \label{thm_pcw_anal_prop2}
\item $\sigma|_{V}=0$. \label{thm_pcw_anal_prop3}
\end{enumerate}
Obviously these properties are closed under union, so that we can choose $V$ to be the maximal set fulfilling (\ref{thm_pcw_anal_prop1})--(\ref{thm_pcw_anal_prop3}).

Now we show that 
\begin{equation}\labeq{thm_pcw_anal_hilf}
\emptyset \neq \partial V\cap U \cap \Omega \subseteq \bigcup_{m=1}^M \partial O_m.
\end{equation}
Since $V$ is relatively open in $\Omega$ and $V\subseteq U$ it follows that
$V\cap \Omega$ is relatively open in $U\cap \Omega$. If $\partial V$ has no intersection with
$U\cap \Omega$, then $V\cap \Omega$ is relatively closed, so that $U\cap \Omega=V\cap \Omega$,
which contradicts $\sigma|_U\not\equiv 0$. Hence, the $\partial V\cap U \cap \Omega\neq \emptyset$.
To show the second assertion in \req{thm_pcw_anal_hilf}, assume that there exists $z\in \partial V\cap \Omega\cap U$ with $z\in O_m$ for one $O_m$. 
Then we can choose an open ball $B\subseteq O_m\cap U$ containing $z$. Since $\sigma$ is analytic on $B$ and
$B\cap V$ has non-empty interior, it follows that $\sigma|_B\equiv 0$, and hence $V\cup B$ has the properties (\ref{thm_pcw_anal_prop1})--(\ref{thm_pcw_anal_prop3}). This contradicts the
maximality of $V$, so that also the second assertion in \req{thm_pcw_anal_hilf} must hold.

Because of \req{thm_pcw_anal_hilf} we can choose an open ball $B$ with $\overline{B}\subseteq U\cap \Omega$ and
\begin{equation}\labeq{thm_pcw_anal_hilf2}
\emptyset\neq S:=\overline B\cap \partial V=\bigcup_{m,k} \overline{\Gamma_m^k}\cap S.
\end{equation}
Using the same arguments as in the proof of lemma~\ref{lemma:pcw_anal_first_bndry}
it follows that by shrinking $B$ we can assume that
\[
\emptyset \neq B\cap \partial V\subseteq \Gamma_m^k
\]
with a smooth boundary piece $\Gamma_m^k$ of one $O_m$. 

Since $\partial O_m\subseteq \bigcup_{m'\neq m}\partial O_ {m'}\cup \partial \Omega$, equation \req{thm_pcw_anal_hilf2} still holds if
we restrict the union to all $m'\in \{1,\ldots,M\}\setminus \{m\}$. By repeating the above argument
(and possibly shrinking $B$ again) we obtain $m'\neq m$ with
\[
\emptyset \neq \overline B\cap \partial V\subseteq \Gamma_m^k\cap \Gamma_{m'}^{k'}
\]
From the definition of smooth boundary pieces it follows that (if we choose $B$ small enough)
\[
B\subseteq \overline{(B\cap O_m)\cup {B\cap O_{m'}}},
\]
so that either $B\cap O_m$ or $B\cap O_{m'}$, but not both, intersects $V$. W.l.o.g, let 
$B\cap O_m$ intersect $V$. Then $\sigma|_{O_m}\equiv 0$, and using lemma~\ref{lemma:pcw_anal_definiteness}
we can shrink $B$ so that $\sigma|_{B\cap O_{m'}}$ is either non-negative or non-positive.
Hence $B\cup V$ fulfills the above properties (\ref{thm_pcw_anal_prop1}) and (\ref{thm_pcw_anal_prop2}) and it is a proper superset of $V$. Hence,
$\sigma$ cannot identically vanish on $B\cup V$, which shows that $B\cup V$ 
fulfills the assertion of theorem \ref{thm:pcw_anal_def}.
\hfill $\Box$

% Using the analyticity of $\sigma$ on $O_m$ we obtain the following corollary.
% \begin{corollary}\label{pcw_anal_cor_def}
% Let $z\in \Gamma_m^k$ be a point on a smooth boundary piece of one of the $O_m$.
% If $\sigma|_{O_m}$ is not identically one, then every ball $B\ni z$ contains a subball $B'\ni z$ and an open subdomain $D$
% with $\overline D\subseteq B'\cap O_m$, so
% that either
% \[
% \sigma|_{B'\cap O_m}\geq 1,\ \sigma|_{D}-1 \in L^\infty_+(D)
% \quad \mbox{ or } \quad
% \sigma|_{B'\cap O_m}\leq 1,\ 1-\sigma|_{D} \in L^\infty_+(D)
% \]
% \end{corollary}

\end{appendix}

%%%%%%%

\bibliography{literaturliste}
\bibliographystyle{abbrv}

\end{document}